\documentclass[11pt]{amsart}
\usepackage[T1]{fontenc}
\usepackage[utf8]{inputenc}
\usepackage{lmodern,amsmath,amssymb,mathtools,microtype}
\usepackage[margin=1in]{geometry}
\usepackage{tikz}
\usetikzlibrary{arrows.meta,calc,positioning,patterns,decorations.pathreplacing}
\definecolor{cvblue}{RGB}{26,87,128}
\definecolor{cvorange}{RGB}{173,87,21}
\definecolor{cvred}{RGB}{163,45,45}
\tikzset{
 cv circle/.style={draw=black!30,line width=.5pt},
 cv orbit/.style={-{Stealth[length=2mm]},draw=cvblue,line width=.85pt,shorten <=2.5pt,shorten >=3pt},
 cv second/.style={cv orbit,draw=cvorange,dashed},
 cv transient/.style={-{Stealth[length=1.7mm]},draw=black!42,line width=.55pt,shorten <=2pt,shorten >=3pt},
 cv axis/.style={-{Stealth[length=1.7mm]},line width=.5pt},
 cv every/.style={font=\small},
}

\usepackage[colorlinks=true,linkcolor=blue!55!black,citecolor=blue!55!black,urlcolor=blue!55!black]{hyperref}
\hypersetup{pdftitle={Homotopy distributions for Vietoris--Rips complexes of random circle samples},pdfauthor={Jaemin Park}}
\numberwithin{equation}{section}
\newtheorem{theorem}{Theorem}[section]
\newtheorem{proposition}[theorem]{Proposition}
\newtheorem{lemma}[theorem]{Lemma}

\theoremstyle{definition}

\theoremstyle{remark}
\newtheorem{remark}[theorem]{Remark}
\DeclareMathOperator{\VR}{VR}
\DeclareMathOperator{\Leb}{Leb}
\DeclareMathOperator{\Pois}{Poisson}
\DeclareMathOperator{\pr}{pr}
\newcommand{\R}{\mathbb R}
\newcommand{\Z}{\mathbb Z}

\newcommand{\Pp}{\mathbb P}
\newcommand{\E}{\mathbb E}
\newcommand{\ind}{\mathbf 1}
\newcommand{\pt}{\mathrm{pt}}
\newcommand{\Qn}{\mathcal Q_n}
\newcommand{\Xn}{\mathcal X_n}
\title[Homotopy distributions of random circle samples]{Homotopy distributions for Vietoris--Rips complexes of random circle samples}
\author{Jaemin Park}
\address{Department of Mathematics and Research Institute of Natural Sciences, Sookmyung Women's University, Cheongpa-ro 47-gil 100, Yongsan-gu, Seoul, 04310, Korea}
\date{October 16, 2026}
\subjclass[2020]{Primary 55U10, 60D05; Secondary 37E10, 52B20, 60F05}
\keywords{Vietoris--Rips complex, random circle sample, homotopy distribution, periodic orbit, circular spacings, Poisson limit}
\AddToHook{cmd/selectfont/after}{%
  \baselineskip=\dimexpr\baselineskip\relax plus 1.2pt minus .1pt\relax}
\AddToHook{cmd/normalsize/after}{%
  \abovedisplayskip=10pt plus 1pt minus 3pt
  \belowdisplayskip=10pt plus 1pt minus 3pt
  \abovedisplayshortskip=0pt plus 1pt
  \belowdisplayshortskip=6pt plus 1pt minus 3pt}
\begin{document}
\begin{abstract}
We determine the distribution of the homotopy type of the Vietoris--Rips complex of a fixed number of independent uniform points on a circle. An exact identity expresses each binomial moment of a periodic-orbit count as one probability for circular gaps, multiplied by a constant avoidance factor. We evaluate these probabilities by finite lattice-point counts and interpolation, obtaining explicit formulas for every sample size and every real scale. On an explicit rational partition, the probabilities are continuous piecewise polynomials of degree at most one less than the sample size. We also determine the homotopy distribution in the first transition window above each positive critical fraction $c_t=t/(2t+1)$. In this window the number of critical periodic orbits converges to a Poisson law with explicit intensity, yielding the limiting probabilities of the preceding odd sphere, the contractible type, and every wedge multiplicity of the critical even sphere.
\end{abstract}
\maketitle

\section{Introduction}
The Vietoris--Rips complex of a finite subset of a circle need not have the homotopy type of a circle. Depending on the sample and the scale, its homotopy type can be a discrete space, a point, an odd-dimensional sphere, or a wedge of even-dimensional spheres. The deterministic classification is due to Adamaszek and Adams~\cite{AA} and its formulation through periodic orbits to Adamaszek, Adams and Motta~\cite{AAM}. We study the probability of each type for a uniform random sample.

Let $S^1=\R/\Z$ be the circle of circumference one, equipped with normalized length measure $\mu$ and the geodesic metric. Write $\pi:\R\to S^1$ for the quotient map. For each integer $n\geq1$, use the probability space
\begin{equation}\label{eq:sample-space}
 (\Omega_n,\mathcal F_n,\Pp_n)
 =\bigl([0,1)^n,\mathcal B([0,1))^{\otimes n},
              (\Leb_1|_{[0,1)})^{\otimes n}\bigr),
 \qquad X_i(\omega)=\pi(\omega_i)\quad(1\leq i\leq n),
\end{equation}
and let $\Xn(\omega)=\{X_1(\omega),\ldots,X_n(\omega)\}$. Expectations under $\Pp_n$ are denoted by $\E_n$. We use the closed Vietoris--Rips convention: a finite subset is a simplex of $\VR(\Xn;r)$ if all its pairwise distances are at most $r$. For a homotopy type $T$, put
\begin{equation}\label{eq:target}
 p_T(n,r)=\Pp_n\{\omega\in\Omega_n:\VR(\Xn(\omega);r)\simeq T\}.
\end{equation}
Our two main results give the full finite-sample distribution and its limit in a critical window.

The endpoint cases are elementary. A one-point sample is contractible at every scale. For $n\geq2$, the complex at $r=0$ is almost surely $n$ discrete points, and it is a simplex at $r\geq1/2$. Henceforth the nontrivial scale range is $0<r<1/2$. We write $\pt$ for the contractible type, interpret $\bigvee^m S^0$ as $m+1$ points, and interpret the empty wedge as a point.

\subsection{The finite-sample homotopy law}
The formulas use the finite set of possible reduced winding fractions
\begin{equation}\label{eq:q-set}
 \Qn=\{0\}\cup
 \left\{\frac w\ell: w,\ell\in\Z,\ 1\leq\ell\leq n,\
           0<w<\frac\ell2,\ \gcd(w,\ell)=1\right\}.
\end{equation}
We first specify the coefficients by finite arithmetic; their probabilistic meaning is proved in Sections~\ref{sec:moments} and~\ref{sec:evaluation}.

For integers $M\geq3$, $1\leq s<M/2$, and $K,H\in\Z_{\geq0}$, define
\begin{equation}\label{eq:count}
 C_{M,s}(K,H)
 =\#\left\{d\in\Z_{\geq0}^{M}:
 \sum_{i=0}^{M-1}d_i=K,\quad
 \sum_{v=0}^{s-1}d_{i+v}\leq H\leq\sum_{v=0}^{s}d_{i+v}
 \quad(i=0,\ldots,M-1)\right\}.
\end{equation}
Here $\#$ denotes the number of elements in a finite set. Thus $C_{M,s}(K,H)$ counts nonnegative integer vectors with total $K$ satisfying all the displayed block inequalities. All subscripts of $d$ are interpreted modulo $M$. In particular, $C_{M,s}(0,0)=1$.

For integers $M\geq2$ and $0\leq s<M/2$, put
\begin{equation}\label{eq:support}
 I_{M,s}=\left(\frac{s}{M},\frac{s+1}{M}\right)\cap(0,1/2),
 \qquad
 \mathcal R_M=\left\{\frac{p}{v}\in[0,1/2]:
             p\in\Z_{\geq0},\ v\in\{1,\ldots,M\}\right\}.
\end{equation}
Repeated fractions in $\mathcal R_M$ are identified. For $M\geq3$ and $1\leq s<M/2$, the coefficient $P_{M,s}(r)$ is zero for $r\notin I_{M,s}$. At a rational scale $r=A/B\in I_{M,s}$, where $A,B$ are coprime positive integers, it is
\begin{equation}\label{eq:rational-coefficient}
 P_{M,s}(A/B)=\frac{1}{B^{M-1}}
 \sum_{k=0}^{M-1}(-1)^{M-1-k}\binom{M-1}{k}
                         C_{M,s}(kB,kA).
\end{equation}
For an irrational $r\in I_{M,s}$, let $\alpha=A_-/B_-<\beta=A_+/B_+$ be the consecutive elements of $\mathcal R_M$ containing $r$, written in lowest terms. Define
\begin{equation}\label{eq:gap-interpolation}
 \begin{split}
 \rho_k&=\frac{kA_-+A_+}{kB_-+B_+},\qquad 1\leq k\leq M,\\
 P_{M,s}(r)&=\sum_{k=1}^{M}P_{M,s}(\rho_k)
     \prod_{\substack{1\leq v\leq M\\v\ne k}}
                  \frac{r-\rho_v}{\rho_k-\rho_v}.
 \end{split}
\end{equation}
The numbers $\rho_1,\ldots,\rho_M$ are distinct rational points in $(\alpha,\beta)$. To evaluate $P_{M,s}(\rho_k)$, write $\rho_k$ in lowest terms and apply~\eqref{eq:rational-coefficient}. We prove that this Lagrange interpolation agrees with the rational formula throughout the interval.

For integers $1\leq M\leq n$ and $0\leq s<M/2$, define
\begin{equation}\label{eq:weights}
 W_{M,s}(n,r)=
 \begin{cases}
  \displaystyle\binom nM(1-Mr)_+^{n-1},&\text{if }s=0,\\[3pt]
  \displaystyle\binom nM(1-Mr+s)^{n-M}P_{M,s}(r),
       &\text{if }s\geq1,\ r\in I_{M,s},\\[3pt]
  0,&\text{if }s\geq1,\ r\notin I_{M,s},
 \end{cases}
\end{equation}
where $x_+=\max\{x,0\}$. The support condition is checked before the power is formed.

Although $P_{M,s}$ was defined above only for $M\geq3$, the case $s=0$ in~\eqref{eq:weights} does not involve $P_{M,s}$. Hence $W_{M,0}$ is also defined for $M=1,2$ whenever $M\leq n$. For integer $s\geq1$, the condition $s<M/2$ already implies $M\geq3$.

Proposition~\ref{prop:moments} computes the binomial moments from gap measures; Section~\ref{sec:finite-completion} evaluates them to obtain the weights in~\eqref{eq:weights}.

For the homotopy classification, write $c_t=t/(2t+1)$ for $t\in\Z_{\geq0}$.

\begin{theorem}[Homotopy distribution]\label{thm:finite}
For $n\geq2$, $0<r<1/2$, $t\in\Z_{\geq0}$ and $m\in\Z_{\geq1}$,
\begin{align}
 p_{S^{2t+1}}(n,r)
 &=\sum_{\substack{q=w/\ell\in\Qn\\c_t<q<c_{t+1}}}
       \sum_{j=1}^{\lfloor n/\ell\rfloor}
                   (-1)^{j-1}W_{j\ell,jw}(n,r),\label{eq:odd-law}\\
 p_{\bigvee^m S^{2t}}(n,r)
 &=\sum_{j=m+1}^{\lfloor n/(2t+1)\rfloor}
       (-1)^{j-m-1}\binom{j}{m+1}
                   W_{j(2t+1),jt}(n,r),\label{eq:even-law}\\
 p_{\pt}(n,r)
 &=\sum_{t=0}^{\lfloor(n-1)/2\rfloor}
       \sum_{j=1}^{\lfloor n/(2t+1)\rfloor}
                   (-1)^{j-1}j\,W_{j(2t+1),jt}(n,r).\label{eq:point-law}
\end{align}
All other homotopy types have probability zero. Each $p_T(n,\cdot)$ is continuous on $(0,1/2)$ and polynomial of degree at most $n-1$, with rational coefficients, on every interval between consecutive elements of $\mathcal R_n$.
\end{theorem}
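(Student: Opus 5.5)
The plan is to combine the deterministic classification of Adamaszek--Adams--Motta~\cite{AAM} with an inclusion--exclusion over binomial moments of periodic-orbit counts.

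\emph{Step 1: the classification.} For a finite $X\subset S^1$ and scale $r$, consider the dynamical system $f_r$ that sends each point to the farthest point within distance $r$ in the positive direction. Every orbit of $f_r$ eventually enters a periodic orbit, and all periodic orbits share a common winding fraction $w/\ell$. Let $N$ be the number of periodic orbits. The homotopy type is then determined as follows:
\begin{itemize}
\item $S^{2t+1}$ when $c_t<w/\ell<c_{t+1}$;
\item $\bigvee^{N-1}S^{2t}$ when $w/\ell=c_t$.
\end{itemize}
For $t=0$ this gives $N$ discrete points, and $N=1$ gives a point. So, almost surely, only the listed types occur, which proves the ``all other types have probability zero'' clause. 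Write $\ell$ for the reduced period. A periodic orbit with reduced fraction $w/\ell$ has $\ell$ points. One can also consider ``orbit configurations'': $M=j\ell$ sample points forming $j$ disjoint periodic orbits of winding $jw$ in total.

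\emph{Step 2: moments.} I will invoke Proposition~\ref{prop:moments} together with the evaluation of Section~\ref{sec:finite-completion}. Together they give
\[
 \E_n\binom{N_q}{j}=W_{j\ell,jw}(n,r),
\]
where $N_q$ is the number of periodic orbits with reduced fraction $q=w/\ell$. I expect this to be the genuine content of those sections; for the theorem I treat it as given. Since $N_q\le n/\ell$, all the sums are finite.

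\emph{Step 3: inclusion--exclusion.} Fix $q$. Then
\[
 \Pp(N_q\ge1)=\sum_{j\ge1}(-1)^{j-1}\E\binom{N_q}{j},
 \qquad
 \Pp(N_q=k)=\sum_{j\ge k}(-1)^{j-k}\binom jk\E\binom{N_q}{j}.
\]
Both identities are the standard binomial-moment (Bonferroni) identities for a bounded integer variable. The events $\{N_q\ge1\}$ are disjoint for distinct $q$, since the fraction is unique. Summing over $q\in\Qn$ with $c_t<q<c_{t+1}$ gives \eqref{eq:odd-law}. Taking $k=m+1$ at $q=c_t$, with $\ell=2t+1$ and $w=t$ (this fraction is reduced), gives \eqref{eq:even-law}.

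For the contractible case, $\pt$ arises exactly when $N_{c_t}=1$ for some $t$, including $t=0$. It does not arise when $N_{c_t}\ge2$, since an even sphere wedge with $m\ge1$ is not contractible, and $\bigvee^m S^0$ with $m\ge1$ is not contractible either. Now
\[
 \Pp(N=1)=\sum_j(-1)^{j-1}j\,\E\binom Nj,
\]
and summing over $t$ gives \eqref{eq:point-law}. One detail must be checked: whether $c_0=0$ with $s=0$ uses the weight $(1-Mr)_+^{n-1}$ correctly for $M=1,2$. This should follow from the convention in \eqref{eq:weights}. The range $t\le\lfloor(n-1)/2\rfloor$ comes from $2t+1\le n$.

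\emph{Step 4: regularity.} Each $W$ is a polynomial in $r$ on the intervals of $\mathcal R_n$, because $\mathcal R_M\subset\mathcal R_n$ and $I_{M,s}$ has endpoints in $\mathcal R_n$. I will rely on the paper's result that $P_{M,s}$ is piecewise polynomial of degree $\le M-1$ with rational coefficients on the $\mathcal R_M$-partition, as justified by the interpolation \eqref{eq:gap-interpolation}. The factor $(1-Mr+s)^{n-M}$ contributes degree $n-M$, so the total degree is at most $n-1$.

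Continuity is the delicate step. Individual $W$ terms jump at the endpoints of $I_{M,s}$: there, $P_{M,s}$ need not vanish, although it should, since orbit configurations of fraction $s/M$ degenerate at the boundary. I would try first to prove it probabilistically. Take $r_k\to r$. Then $\VR(\Xn;r_k)\to\VR(\Xn;r)$ almost surely, because ties among the finitely many distances occur with probability zero. Hence the homotopy type is eventually constant almost surely, and dominated convergence gives continuity of every $p_T$. This avoids any need for boundary vanishing of $P_{M,s}$. I expect this to be the main subtlety.
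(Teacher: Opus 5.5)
Your derivation of the three formulas follows the paper's proof. That covers the classification, the identity $\E_n\binom{N_q}{j}=W_{j\ell,jw}$ from Propositions~\ref{prop:moments} and~\ref{prop:gap-evaluation}, binomial inversion, disjointness over $q$, and the extra sum over critical fractions for $\pt$. Your degree and partition argument for polynomiality is also the paper's. One small point: polynomiality of $P_{M,s}$ on each piece comes from Lemma~\ref{lem:polynomial} combined with Proposition~\ref{prop:gap-evaluation}. The interpolation formula alone does not give it, since it presupposes that result.

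The genuine difference is the continuity step.
\begin{itemize}
\item The paper proves each weight $W_{M,s}(n,\cdot)$ continuous and then sums. On $I_{M,s}$ it uses the bound $0\le W_{M,s}(n,r)\le\binom{n}{M}\nu_M(Q_{M,s}(r))$. It adds continuity of the gap measure (Lemma~\ref{lem:polynomial}) and its vanishing at the endpoints of $I_{M,s}$ (Lemma~\ref{lem:gap-support}).
\item You apply bounded convergence directly to the indicator of $\{\VR(\Xn;r)\simeq T\}$. For the fixed limit scale $r$, almost every sample has no pairwise distance equal to $r$, so $\VR(\Xn;r_k)=\VR(\Xn;r)$ for all large $k$.
\end{itemize}
Your argument is valid, shorter, and independent of the arithmetic evaluation. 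The paper's route additionally gives continuity of every individual term in the formulas.

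Your side remark that individual $W$ terms jump at the endpoints of $I_{M,s}$ is false, though your proof does not depend on it. At an endpoint $r_0\in(0,1/2)$, Lemma~\ref{lem:gap-support} gives $\nu_M(\mathcal S_{M,s}(r_0))=0$. Since the gap measure is continuous, $P_{M,s}(r)\to0$ as $r\to r_0$ from inside $I_{M,s}$. Your own method shows this as well. Outside the null set where some clockwise separation equals $r$, the map $f_{r_k}$ coincides with $f_r$ for large $k$. Hence each $\binom{N_q(n,r_k)}{j}$ converges almost surely, and every $B_{q,j}=W_{j\ell,jw}$ is continuous. Every admissible pair $(M,s)$ has this form with $j=\gcd(M,s)$, so no weight is discontinuous.
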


The reason for treating $\pt$ separately from the even-dimensional wedges is explained in the proof in Section~\ref{sec:finite-completion}. Thus the result specifies the probabilities at rational and irrational scales without an unevaluated integral. The partition consists of possible changes of the probability polynomials; it does not describe the changes of a fixed sample's complex. The same weight is reused whenever its pair $(M,s)$ occurs. There are only $n+\sum_{M=2}^n\lfloor(M-1)/2\rfloor$ such pairs. This is a bound on the number of coefficient families, not a claim of polynomial-time evaluation.

\subsection{The limiting homotopy law in a critical window}
Fix $t\in\Z_{\geq1}$ and $a>0$, and set
\begin{equation}\label{eq:critical-parameters}
 \ell=2t+1,\qquad
 r_n=c_t+a n^{-\ell/(\ell-1)},\qquad
 \lambda=\frac{\ell^{\ell-1}a^{\ell-1}}{\ell!}.
\end{equation}
For all sufficiently large $n$, $r_n<1/2$.

\begin{theorem}[Limiting homotopy distribution]\label{thm:critical}
For the sequence in~\eqref{eq:critical-parameters} and every integer $m\geq1$,
\begin{align}
 \lim_{n\to\infty}p_{S^{2t-1}}(n,r_n)&=e^{-\lambda},\label{eq:critical-odd}\\
 \lim_{n\to\infty}p_{\pt}(n,r_n)&=\lambda e^{-\lambda},\label{eq:critical-point}\\
 \lim_{n\to\infty}p_{\bigvee^m S^{2t}}(n,r_n)
       &=e^{-\lambda}\frac{\lambda^{m+1}}{(m+1)!}.\label{eq:critical-even}
\end{align}
The total probability of homotopy types outside this family tends to zero as $n\to\infty$.
\end{theorem}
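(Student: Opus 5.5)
We will deduce Theorem~\ref{thm:critical} from the finite-sample formulas of Theorem~\ref{thm:finite}, read as an inclusion--exclusion over binomial moments. Let $N_n$ be the number of critical periodic orbits of winding fraction $t/\ell=c_t$. Proposition~\ref{prop:moments} identifies $W_{j\ell,jt}(n,r)$ with the binomial moment $\E_n\binom{N_n}{j}$. By the deterministic classification, at scale $r_n$ slightly above $c_t$ the complex is
\begin{itemize}
\item $S^{2t-1}$ when $N_n=0$ and no orbit of fraction in $(c_t,c_{t+1})$ exists;
\item contractible when $N_n=1$;
\item $\bigvee^{N_n-1}S^{2t}$ when $N_n\ge2$.
\end{itemize}
Matching \eqref{eq:even-law} with the standard formula $\Pp(N=m+1)=\sum_j(-1)^{j-m-1}\binom{j}{m+1}\E\binom Nj$ shows the claims reduce to two statements. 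First, $N_n\Rightarrow\Pois(\lambda)$. Second, with probability $1-o(1)$ no other periodic orbit (of fraction $q\ne c_t$) is present. The second statement also gives that the types outside the family have vanishing probability.

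\textbf{Step 1 (moment asymptotics).} By the method of moments it suffices to show $W_{j\ell,jt}(n,r_n)\to\lambda^j/j!$ for each fixed $j$, together with a uniform bound $W_{j\ell,jt}\le C\lambda_*^j/j!$ for some $\lambda_*$ if we want the alternating sums to converge. For fixed $j$ the Poisson law is determined by its moments, so the Bonferroni inequalities suffice and no uniform bound is needed. Write $M=j\ell$, $s=jt$, so $1-Mr_n+s=1-j\ell a n^{-\ell/(\ell-1)}$. The avoidance factor then gives $(1-j\ell a n^{-\ell/(\ell-1)})^{n-j\ell}\to1$. The binomial factor is $\binom n{j\ell}\sim n^{j\ell}/(j\ell)!$.

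The main work is the local behaviour of $P_{j\ell,jt}(r)$ as $r\downarrow c_t$. Since $P_{M,s}$ is a polynomial on the first interval of $\mathcal R_M$ above $s/M=c_t$, we need its leading term. Geometrically, $P_{M,s}(r)$ is the gap probability that $M$ points (as $M$ circular spacings) form $j$ disjoint critical $\ell$-orbits. At $r=c_t+\varepsilon$, a single $\ell$-orbit requires the $\ell$ points to lie within $O(\varepsilon)$ of a regular $\ell$-gon configuration. Its volume in the simplex of spacings is $\asymp \varepsilon^{\ell-1}$, with explicit constant $\ell^{\ell-1}/\ell!$ times combinatorial factors. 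This constant should come out of the lattice count $C_{M,s}$ in \eqref{eq:rational-coefficient} as the volume of the polytope cut out by the block inequalities.

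For $j$ orbits we expect $P_{j\ell,jt}(c_t+\varepsilon)\sim\kappa_j\varepsilon^{j(\ell-1)}$. The constant should satisfy $\binom n{j\ell}\kappa_j\varepsilon^{j(\ell-1)}\to\lambda^j/j!$ with $\varepsilon=an^{-\ell/(\ell-1)}$. The exponents match, since $n^{j\ell}\cdot n^{-j\ell}=1$, which explains the scaling in \eqref{eq:critical-parameters}. Computing $\kappa_j$, i.e.\ the leading coefficient of the polytope volume near the degenerate vertex and showing it factorizes as the $j$-fold product with the combinatorial factor $(j\ell)!/(j!(\ell!)^j)$ from splitting $j\ell$ points into orbits, is the step I expect to be the main obstacle. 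I would attack it by the change of variables $d_i=\text{(regular gap)}+\varepsilon y_i$. This reduces the block inequalities to a cone of interleaved constraints whose volume can be computed by recognizing the $j$ interleaved orbits as independent, up to lower-order corrections.

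\textbf{Step 2 (other orbits negligible).} For $q=w/\ell'\ne c_t$ we will bound the first moment by
\[
\E_n(\text{number of }q\text{-orbits})=W_{\ell',w}(n,r_n)+\dots
\]
Orbits with $q<c_t$ are excluded once $r$ exceeds the larger endpoint. More precisely, $W_{M,s}$ with $s\ge1$ vanishes unless $r\in I_{M,s}$, and for $s/M<c_t$ close to $c_t$ the avoidance factor $(1-Mr+s)^{n-M}$ decays exponentially because $Mr-s$ is bounded below. For $q>c_t$ with $q\le r_n$ eventually impossible ($r_n\downarrow c_t$), the support condition kills the term. The $s=0$ terms (discrete/disconnected types) carry $(1-Mr)_+^{n-1}$ with $Mr>0$ fixed, so they decay exponentially. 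The remaining point is uniformity: there are finitely many pairs $(M,s)$ for each $n$, but their number grows. I would bound the total by a sum over $M\le n$ of $\binom nM e^{-cn}$-type terms. Here $c$ is the distance from $r_n$ to the nearest fraction $s/M\ne c_t$ with $M\le n$, which can be as small as $1/(\ell M)$. This near-resonance regime of large $M$ needs the separate estimate that a large-$M$ orbit is improbable at all. I would handle it using that $\binom nM(1-Mr+s)^{n-M}P_{M,s}$ contains the factor $P_{M,s}\le1/(M-1)!$-type volume decay.
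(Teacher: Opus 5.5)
\textbf{Step~2 tries to prove a false statement, so your reduction breaks down.} A self-map of a finite set always has periodic orbits, and exactly one count $N_q$ is positive. So on $\{N_n=0\}$ there are orbits of some fraction $q\ne c_t$. By your own Step~1 this event has limiting probability $e^{-\lambda}>0$, and up to an event of vanishing probability it is exactly the $S^{2t-1}$ event. Your two ``reduced statements'' therefore contradict each other. Your first bullet also misreads Proposition~\ref{prop:classifier}: $S^{2t-1}$ means a realized fraction $c_{t-1}<q_n<c_t$, not the absence of orbits in $(c_t,c_{t+1})$.

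For the same reason, the large-$M$ near-resonance estimate you plan for fractions just below $c_t$ cannot hold, because those orbits carry the mass $e^{-\lambda}$. Lemma~5.2 of Adamaszek--Adams--Motta gives $r_n-2\log n/n\le q_n<r_n$ with probability at least $1-1/n$, so when $q_n\ne c_t$ its denominator exceeds $n/(2\ell\log n)$.

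The right target is $\Pp_n(c_{t-1}<q_n\le c_t)\to1$. The upper half is deterministic for large $n$: a fraction $w'/\ell'>c_t$ with $\ell'\le n$ exceeds $c_t$ by at least $1/(\ell n)$, while $q_n<r_n=c_t+\delta_n$ and $n\delta_n\to0$. The bare fact $r_n\downarrow c_t$ is not enough, since $\Qn$ grows with $n$. For the lower half the paper quotes that lemma of Adamaszek--Adams--Motta. Your first-moment idea does work here, and only here. If $w/\ell'\le c_{t-1}$, the support condition $r_n<(w+1)/\ell'$ forces $\ell'<(2t-1)(2t+1)$. So boundedly many pairs contribute, each at most $n^{\ell'}\bigl(1-\ell'(c_t-c_{t-1})\bigr)^{n-\ell'}$, plus $n(1-r_n)^{n-1}$ for $q=0$. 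The uniformity problem you worried about comes only from aiming at the wrong target.

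\textbf{Step~1 leaves its one essential input unproved.} Your decomposition $W_{j\ell,jt}(n,r_n)=\binom{n}{j\ell}(1-j\ell\delta_n)^{n-j\ell}P_{j\ell,jt}(r_n)$ is exact. But it needs
$P_{j\ell,jt}(c_t+\varepsilon)=\frac{(j\ell)!}{j!(\ell!)^j}(\ell\varepsilon)^{j(\ell-1)}(1+o(1))$ as $\varepsilon\downarrow0$,
and you do not establish this, even for $j=1$.

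The paper gets $j=1$ exactly, $P_{\ell,t}(r)=(\ell r-t)_+^{\ell-1}$, from the bijection $u_i=kA-\sum_{v<t}d_{i+v}$, $d_i=k(B-2A)+u_{i-t}+u_{i+1}$. This turns $C_{\ell,t}(kB,kA)$ into $\binom{kD+\ell-1}{\ell-1}$ with $D=\ell A-tB$.

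For $j\ge2$ the paper never computes a polytope volume. It writes $\binom{N}{j}$ as a sum over unordered collections of $j$ disjoint $\ell$-subsets. Independence of disjoint coordinate blocks gives probability exactly $(\ell\delta)^{j(\ell-1)}$ that each group is a critical orbit of itself. Interference is then bounded by a union bound: any other point lands in a given avoidance set of measure $\ell\delta$ with probability $\ell\delta$. For a point of another group this follows from the uniform, independent anchor of that group after conditioning; for an unselected point, from its conditional uniformity. The result is
$A_{n,j}(\delta)(1-\varepsilon_{n,j}(\delta))_+\le B_{c_t,j}(n,c_t+\delta)\le A_{n,j}(\delta)$ with $\varepsilon_{n,j}(\delta_n)\to0$.
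Taking $n=j\ell$ gives exactly your constant $\kappa_j$, and the upper bound also supplies domination for the inversion sums.

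Your proposed local volume computation near the degenerate face $Q_{j\ell,jt}(c_t)$ is plausible. That face is $(j-1)$-dimensional, parametrizing relative rotations of $j$ interleaved regular $\ell$-gons. Its main difficulty is the region where two orbits come within $O(\varepsilon)$ of each other, which is exactly what the paper's union bound handles.
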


The random variable behind this law is the number of periodic orbits with winding fraction $c_t$. Its distribution converges to $\Pois(\lambda)$. With probability tending to one, zero critical orbits give $S^{2t-1}$, one gives a point, and $m+1$ give $\bigvee^m S^{2t}$.

\subsection{Proof outline and organization of the paper}
We use the cyclic-graph and periodic-orbit framework of Adamaszek and Adams~\cite{AA} and Adamaszek, Adams and Motta~\cite{AAM}. Binomial moments of the orbit counts are computed by conditioning on the circular gaps of selected orbit unions. The resulting gap-region probabilities are evaluated by lattice-point counts and polynomial interpolation, giving the finite-sample law. In the critical window, the moment estimates yield a Poisson limit for the orbit count, and winding localization converts this into the limiting homotopy law.

Section~\ref{sec:model} introduces the cyclic dynamics and probability inversion. Section~\ref{sec:moments} constructs the gap maps and proves the all-order orbit-moment identity in terms of gap measures. Section~\ref{sec:evaluation} studies their real-parameter geometry and evaluates them by lattice counts and interpolation to prove Theorem~\ref{thm:finite}. Section~\ref{sec:critical} proves the critical one-orbit formula, controls interactions between candidate orbits, and proves Theorem~\ref{thm:critical}.

\section{Sampling, cyclic dynamics and homotopy types}\label{sec:model}
For $x,y\in S^1$, let $\vec d(x,y)\in[0,1)$ denote clockwise separation. The geodesic distance is
\[
 d(x,y)=\min\{\vec d(x,y),\vec d(y,x)\}.
\]
We identify the increasing coordinate on $\R/\Z$ with the clockwise orientation. All random objects below are measurable on the product spaces~\eqref{eq:sample-space}. Coincidences of sampled points form a null event. We may assign arbitrary fixed values to orbit counts on this event without changing their laws.

\subsection{The farthest-clockwise map}
Let $X\subset S^1$ consist of $n$ distinct points and fix $0<r<1/2$. Write $\widetilde X=\pi^{-1}(X)$ and choose its increasing enumeration $(x_i)_{i\in\Z}$ with $x_{i+n}=x_i+1$. Define $F_r:\widetilde X\longrightarrow\widetilde X$ by
\begin{equation}\label{eq:farthest}
 F_r(x_i)=x_{\max\{b\in\Z:b\geq i,\ x_b-x_i<r\}},
 \qquad i\in\Z.
\end{equation}
The maximum exists because $b=i$ is eligible and $\lim_{b\to+\infty}x_b=+\infty$. The relation $x_{i+n}=x_i+1$ gives
\[
 F_r(x_i+k)=F_r(x_i)+k,\qquad i,k\in\Z.
\]
We therefore define $f_r:X\longrightarrow X$ to be the unique map such that
\[
 f_r\circ\pi=\pi\circ F_r\quad\text{on }\widetilde X.
\]

Thus $f_r$ sends a point to the clockwise-farthest sampled point at distance strictly less than $r$. The maximal eligible index is nondecreasing as $i$ increases. The resulting circle map preserves cyclic order, as in~\cite[Definition~2.1]{AAM}.

For each fixed $n$ and deterministic $r$, the event that two distinct sampled points have clockwise separation exactly $r$ is $\Pp_n$-null. Away from this event the strict and closed distance conventions coincide. The graph joining two points whenever one of their clockwise separations is less than $r$ has clique complex $\VR_{<}(X;r)$. We may therefore use the strict dynamics to compute the closed-complex probabilities~\eqref{eq:target}. All such null-event assertions are for a fixed scale, not simultaneously for every real scale.

A periodic orbit has a least period $\ell$, measured in iterations, and winding number $w\in\Z_{\geq0}$ specified by
\begin{equation}\label{eq:period-winding}
 F_r^{\ell}(\widetilde x)=\widetilde x+w.
\end{equation}
The period is the number of points on the orbit, not its geodesic length. The winding fraction $q=w/\ell$ is reduced, and all periodic orbits of a finite cyclic map share this same pair $(w,\ell)$~\cite[Lemma~2.3 and preceding discussion]{AAM}. Each step is shorter than $r$, so $q<r<1/2$ and $q\in\Qn$.

For $q=w/\ell\in\Qn$, written in lowest terms with $0$ represented as $0/1$, define the random variable $N_q(n,r):\Omega_n\to\Z_{\geq0}$ to be the number of periodic orbits of fraction $q$; it is zero when the sample's winding fraction differs from $q$. Exactly one of these counts is positive on the distinct-point locus, and
\begin{equation}\label{eq:orbit-bound}
 0\leq N_q(n,r)\leq\lfloor n/\ell\rfloor.
\end{equation}
Measurability follows since the finite map is specified by finitely many Borel comparisons of sample coordinates.

\begin{samepage}
The homotopy classification can be stated in terms of the common winding fraction and the number of periodic orbits~\cite[Proposition~6.4]{AAM}.
\begin{proposition}[Adamaszek--Adams--Motta, 2017]\label{prop:classifier}
Let $X$ be a finite set of distinct circle points, $0<r<1/2$, and let $q$ and $N$ be the common winding fraction and the number of periodic orbits of $f_r$. Then
\[
 \VR_{<}(X;r)\simeq
 \begin{cases}
 S^{2t+1},&\text{if }c_t<q<c_{t+1},\\
 \displaystyle\bigvee^{N-1}S^{2t},&\text{if }q=c_t,
 \end{cases}
 \qquad t\in\Z_{\geq0}.
\]
The same classification holds $\Pp_n$-almost surely for $\VR(\Xn;r)$ at each fixed $r$.
\end{proposition}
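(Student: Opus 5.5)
The first assertion is the cited result~\cite[Proposition~6.4]{AAM}. The plan is to import it after checking that the conventions agree, and then to prove only the final sentence, which transfers the classification to the closed complex.

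\emph{Matching conventions.} Since $0<r<1/2$, two points $x_i,x_b$ are adjacent in $\VR_{<}(X;r)$ exactly when one clockwise separation is less than $r$. The vertices adjacent to $x_i$ in the clockwise direction are therefore the consecutive run $x_{i+1},\ldots,F_r(x_i)$, by~\eqref{eq:farthest}. So $\VR_{<}(X;r)$ is the clique complex of the cyclic graph determined by the cyclic-order-preserving map $f_r$. This is the setting of~\cite[Definition~2.1]{AAM}. I expect that the fraction $q$ and the count $N$ appearing in~\cite[Proposition~6.4]{AAM} are the common winding fraction and the number of periodic orbits of $f_r$ as defined in~\eqref{eq:period-winding}. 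I would confirm two points against~\cite{AAM}:
\begin{itemize}
\item that their winding fraction is $w/\ell$, not a reciprocal or doubled normalization;
\item that the interval boundaries are exactly $c_t=t/(2t+1)$.
\end{itemize}
The classification of $\VR_{<}(X;r)$ then follows verbatim. The boundary cases come out as stated: $q=0=c_0$ gives a wedge of $N-1$ copies of $S^0$, with $N$ fixed points, and $q=c_t$ with $N=1$ gives the empty wedge, which is a point.

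\emph{Almost-sure transfer.} Fix $r$ and $n$. For $i\neq j$, the event $\{\vec d(X_i,X_j)=r\}\cup\{\vec d(X_j,X_i)=r\}$ is a subset of the square $[0,1)^2$ of Lebesgue measure zero, since it consists of finitely many line segments. Coincidences $X_i=X_j$ form a null event as well. Off the finite union of these events, the points are distinct, and for every pair we have $d(X_i,X_j)\le r$ if and only if $d(X_i,X_j)<r$. Hence $\VR(\Xn;r)=\VR_{<}(\Xn;r)$ as simplicial complexes, and the classification applies with $X=\Xn(\omega)$. This uses only that $r$ is fixed: the union over uncountably many $r$ is not null, which is why the statement is made for each fixed $r$ separately.

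\emph{Main obstacle.} The only step that needs care is the convention check against~\cite{AAM}: their strict versus closed inequality, and their choice of orientation and definition of $f_r$. If their statement is phrased for the closed complex with a map using $\le r$, I would instead apply it directly to $\VR(\Xn;r)$. I would then use the same null-event argument to identify their map with $f_r$ almost surely, since the two maps differ only when some separation equals $r$ exactly. Either route gives the proposition.
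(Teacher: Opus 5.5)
Your proposal is correct and follows the same route as the paper. The paper likewise imports the classification of $\VR_{<}(X;r)$ from Adamaszek--Adams--Motta (Proposition~6.4), viewing $\VR_{<}$ as the clique complex of the cyclic graph of the order-preserving map $f_r$, and transfers it to $\VR(\Xn;r)$ by noting that, at a fixed scale, the event that some clockwise separation equals $r$ exactly is $\Pp_n$-null, with the same caveat you give that this holds only scale by scale.
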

\end{samepage}

Figure~\ref{fig:dynamics} shows why the orbit count is essential even when the winding fraction is fixed. In the complex diagrams, every clique of the displayed graph is filled. Following the closed-neighborhood argument in~\cite[Lemma~4.1, proof]{AA}, the gray vertices can be removed successively: each has its closed graph neighborhood contained in that of a retained vertex. Sending the removed vertex to that retained vertex gives a simplicial retraction whose composite with inclusion is homotopic to the identity, so these deletions preserve the homotopy type.

\begin{figure}[tb]
\centering
\begin{tikzpicture}[cv every]
 \node at (1.5,4.15) {farthest-point map};
 \node at (6.0,4.15) {Vietoris--Rips complex};
 \node at (11.0,4.15) {after vertex deletions};
 \begin{scope}[shift={(1.5,2.4)}]
  \draw[cv circle] (0,0) circle (1.2);
  \foreach \i/\x in {0/0,1/.08,2/.34,3/.46,4/.67,5/.84}
   \coordinate (a\i) at ({90-360*\x}:1.2);
  \foreach \i/\j in {1/2,3/4,5/1} \draw[cv transient] (a\i)--(a\j);
  \foreach \i/\j in {0/2,2/4,4/0} \draw[cv orbit] (a\i)--(a\j);
  \foreach \i in {1,3,5} \fill[black!48] (a\i) circle (1.8pt);
  \foreach \i in {0,2,4} \fill[cvblue] ($(a\i)+(-2pt,-2pt)$) rectangle ($(a\i)+(2pt,2pt)$);
  \node at (0,-1.6) {\textup{(a)} $N_{1/3}(n,r)=1$};
 \end{scope}
 \draw[-{Stealth[length=2mm]}] (3.0,2.4)--(4.1,2.4);
 \begin{scope}[shift={(6.0,2.4)}]
  \draw[cv circle] (0,0) circle (1.2);
  \foreach \i/\x in {0/0,1/.08,2/.34,3/.46,4/.67,5/.84}
   \coordinate (v\i) at ({90-360*\x}:1.2);
  \foreach \i/\j/\k in {0/1/2,0/1/5,0/2/4,0/4/5,2/3/4}
   \fill[cvblue!12] (v\i)--(v\j)--(v\k)--cycle;
  \fill[cvblue!25] (v0)--(v2)--(v4)--cycle;
  \foreach \i/\j in {0/1,0/2,0/4,0/5,1/2,1/5,2/3,2/4,3/4,4/5}
   \draw[draw=cvblue!65,line width=.55pt] (v\i)--(v\j);
  \foreach \i in {1,3,5} \fill[black!48] (v\i) circle (2pt);
  \foreach \i in {0,2,4} \fill[cvblue] ($(v\i)+(-2pt,-2pt)$) rectangle ($(v\i)+(2pt,2pt)$);
  \node at (0,-1.6) {filled triangles};
 \end{scope}
 \draw[-{Stealth[length=2mm]}] (7.5,2.4)--(8.8,2.4);
 \node[font=\footnotesize,align=center] at (8.15,1.7) {delete gray\\vertices};
 \begin{scope}[shift={(10.2,2.4)}]
  \coordinate (c0) at (0,.9);
  \coordinate (c2) at (.85,-.6);
  \coordinate (c4) at (-.85,-.6);
  \filldraw[fill=cvblue!25,draw=cvblue,line width=.7pt] (c0)--(c2)--(c4)--cycle;
  \foreach \i in {0,2,4} \fill[cvblue] ($(c\i)+(-2pt,-2pt)$) rectangle ($(c\i)+(2pt,2pt)$);
 \end{scope}
 \draw[-{Stealth[length=2mm]}] (11.4,2.4)--(12.35,2.4);
 \node[above=2pt] at (11.87,2.4) {$\simeq$};
 \fill[cvblue] (12.7,2.4) circle (2.5pt);
 \node[below=5pt] at (12.7,2.4) {$\pt$};

 \begin{scope}[shift={(1.5,-1.65)}]
  \draw[cv circle] (0,0) circle (1.2);
  \foreach \i/\x in {0/0,1/.05,2/.13,3/.34,4/.39,5/.50,6/.67,7/.72,8/.88}
   \coordinate (b\i) at ({90-360*\x}:1.2);
  \foreach \i/\j in {2/4,5/7,8/2} \draw[cv transient] (b\i)--(b\j);
  \foreach \i/\j in {0/3,3/6,6/0} \draw[cv orbit] (b\i)--(b\j);
  \foreach \i/\j in {1/4,4/7,7/1} \draw[cv second] (b\i)--(b\j);
  \foreach \i in {2,5,8} \fill[black!48] (b\i) circle (1.8pt);
  \foreach \i in {0,3,6} \fill[cvblue] ($(b\i)+(-2pt,-2pt)$) rectangle ($(b\i)+(2pt,2pt)$);
  \foreach \i in {1,4,7} \draw[draw=cvorange,fill=white,line width=.9pt] (b\i) circle (2.1pt);
  \node at (0,-1.6) {\textup{(b)} $N_{1/3}(n,r)=2$};
 \end{scope}
 \draw[-{Stealth[length=2mm]}] (3.0,-1.65)--(4.1,-1.65);
 \begin{scope}[shift={(6.0,-1.65)}]
  \foreach \i/\x in {0/0,1/.05,2/.13,3/.34,4/.39,5/.50,6/.67,7/.72,8/.88}
   \coordinate (w\i) at ({90-360*\x}:1.25);
  \foreach \i/\j in {0/1,0/2,0/3,0/6,0/7,0/8,1/2,1/3,1/4,1/7,1/8,2/3,2/4,2/8,3/4,3/5,3/6,4/5,4/6,4/7,5/6,5/7,6/7,6/8,7/8}
   \draw[draw=cvblue!52,line width=.45pt] (w\i)--(w\j);
  \foreach \i in {2,5,8} \fill[black!48] (w\i) circle (2pt);
  \foreach \i in {0,3,6} \fill[cvblue] ($(w\i)+(-2pt,-2pt)$) rectangle ($(w\i)+(2pt,2pt)$);
  \foreach \i in {1,4,7} \draw[draw=cvorange,fill=white,line width=.9pt] (w\i) circle (2.1pt);
  \node at (0,-1.6) {fill every clique};
 \end{scope}
 \draw[-{Stealth[length=2mm]}] (7.5,-1.65)--(8.8,-1.65);
 \node[font=\footnotesize,align=center] at (8.15,-2.35) {delete gray\\vertices};
 \begin{scope}[shift={(10.8,-1.65)}]
  \coordinate (o0) at (0,1.25);
  \coordinate (o4) at (0,-1.25);
  \coordinate (o1) at (-1.1,-.3);
  \coordinate (o3) at (.85,-.3);
  \coordinate (o6) at (1.1,.3);
  \coordinate (o7) at (-.85,.3);
  \foreach \i/\j/\k in {0/6/7,0/1/7,4/6/7,4/1/7}
   \fill[cvblue!8] (o\i)--(o\j)--(o\k)--cycle;
  \foreach \i/\j/\k in {0/1/3,0/3/6,4/1/3,4/3/6}
   \fill[cvblue!24] (o\i)--(o\j)--(o\k)--cycle;
  \foreach \i/\j in {0/1,0/3,0/6,1/3,1/4,3/4,3/6,4/6}
   \draw[draw=cvblue,line width=.65pt] (o\i)--(o\j);
  \foreach \i/\j in {0/7,1/7,4/7,6/7}
   \draw[draw=cvblue!65,dashed,line width=.55pt] (o\i)--(o\j);
  \foreach \i in {0,3,6} \fill[cvblue] ($(o\i)+(-2pt,-2pt)$) rectangle ($(o\i)+(2pt,2pt)$);
  \foreach \i in {1,4,7} \draw[draw=cvorange,fill=white,line width=.9pt] (o\i) circle (2.1pt);
  \node[align=center] at (.25,-1.7) {octahedral boundary $\cong S^2$};
 \end{scope}
 \node[anchor=west,font=\footnotesize] at (.2,-4.1)
 {\textcolor{cvblue}{\rule{5mm}{.8pt}} periodic orbit\quad
  \textcolor{cvorange}{\rule{5mm}{.8pt}} second periodic orbit\quad
  \textcolor{black!48}{\rule{5mm}{.6pt}} nonperiodic vertices};
\end{tikzpicture}
\caption{Two farthest-point maps with the same winding fraction and different orbit counts $N$, together with their Vietoris--Rips complexes and homotopy types.}
\label{fig:dynamics}
\end{figure}

\subsection{Recovering probabilities from orbit moments}
For $q=w/\ell\in\Qn$ and $0\leq j\leq\lfloor n/\ell\rfloor$, define the binomial moments of $N_q(n,r)$ by
\begin{equation}\label{eq:binomial-moments}
 B_{q,j}(n,r)=\E_n\binom{N_q(n,r)}{j}.
\end{equation}
These are the factorial moments divided by $j!$.

\begin{lemma}\label{lem:inversion}
For $q=w/\ell\in\Qn$ and $0\leq k\leq\lfloor n/\ell\rfloor$,
\begin{align}
 \Pp_n(N_q(n,r)=k)
 &=\sum_{j=k}^{\lfloor n/\ell\rfloor}(-1)^{j-k}\binom jk B_{q,j}(n,r),\label{eq:inversion}\\
 \Pp_n(N_q(n,r)\geq1)
 &=\sum_{j=1}^{\lfloor n/\ell\rfloor}(-1)^{j-1}B_{q,j}(n,r).\label{eq:positive-count}
\end{align}
\end{lemma}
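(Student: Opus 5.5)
The plan is to derive both identities pointwise on $\Omega_n$ from an elementary binomial inversion for bounded nonnegative integers, and then take expectations. Write $N=N_q(n,r)$ and $L=\lfloor n/\ell\rfloor$. By~\eqref{eq:orbit-bound}, $N$ takes values in $\{0,1,\ldots,L\}$; on the null coincidence event it has an arbitrary fixed value, which we take in this range, and this does not affect any probability or expectation. The key pointwise identity, valid for every integer $0\le N\le L$ and $0\le k\le L$, is
\[
 \ind\{N=k\}=\sum_{j=k}^{L}(-1)^{j-k}\binom jk\binom Nj.
\]

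To verify it, note that terms with $j>N$ vanish. If $N<k$, the sum is therefore empty and equals $0$. If $N\ge k$, use $\binom jk\binom Nj=\binom Nk\binom{N-k}{j-k}$. The sum then becomes
\[
 \binom Nk\sum_{i=0}^{N-k}(-1)^i\binom{N-k}{i}=\binom Nk(1-1)^{N-k},
\]
which equals $\ind\{N=k\}$ with the convention $0^0=1$. Since the sum is finite and every $\binom Nj$ is a bounded measurable random variable, we may take $\E_n$ of both sides and use linearity together with~\eqref{eq:binomial-moments}. This gives~\eqref{eq:inversion}.

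For~\eqref{eq:positive-count}, there are two routes. One is to sum~\eqref{eq:inversion} over $k=1,\ldots,L$ and exchange the finite sums. The more direct route uses the pointwise identity
\[
 \ind\{N\ge1\}=1-(1-1)^N=\sum_{j=1}^{N}(-1)^{j-1}\binom Nj=\sum_{j=1}^{L}(-1)^{j-1}\binom Nj,
\]
which follows from the binomial theorem when $N\ge1$ and is trivially true when $N=0$. Taking expectations again finishes the argument.

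There is no genuine obstacle here. The only point that needs care is the truncation of all sums at $L$. This truncation is justified by~\eqref{eq:orbit-bound}, which guarantees that $\binom Nj=0$ for $j>L$, so the finite inversion is exact and no convergence question arises.
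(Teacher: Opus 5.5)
Your proof is correct and is essentially the paper's argument: both are binomial inversion over the bounded range $0\leq N\leq\lfloor n/\ell\rfloor$. The difference is only in execution. The paper writes $\E_n(1+z)^N=\sum_j B_{q,j}(n,r)z^j$, substitutes $z=v-1$, compares coefficients of $v^k$, and gets the second identity from the case $k=0$ by complementation. You instead verify the same identity pointwise via $\binom jk\binom Nj=\binom Nk\binom{N-k}{j-k}$ and then take expectations.
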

\begin{proof}
Put $N=N_q(n,r)$. Since $0\leq N\leq\lfloor n/\ell\rfloor$ almost surely and $\binom Nj=0$ for $j>N$, the binomial theorem and linearity of expectation give the polynomial identity
\[
 \E_n(1+z)^N=\sum_{j=0}^{\lfloor n/\ell\rfloor}B_{q,j}(n,r)z^j.
\]
After substituting $z=v-1$, the left-hand side is $\E_n v^N$. Expanding it according to the possible values of $N$ and expanding each $(v-1)^j$ on the right gives
\[
 \begin{aligned}
 \sum_{k=0}^{\lfloor n/\ell\rfloor}\Pp_n(N=k)v^k
 &=\sum_{j=0}^{\lfloor n/\ell\rfloor}B_{q,j}(n,r)(v-1)^j\\
 &=\sum_{j=0}^{\lfloor n/\ell\rfloor}B_{q,j}(n,r)
       \sum_{k=0}^{j}(-1)^{j-k}\binom jk v^k\\
 &=\sum_{k=0}^{\lfloor n/\ell\rfloor}
       \left(\sum_{j=k}^{\lfloor n/\ell\rfloor}(-1)^{j-k}\binom jk B_{q,j}(n,r)\right)v^k.
 \end{aligned}
\]
All sums are finite, so the change of summation order is valid. Comparing the coefficients of $v^k$ proves~\eqref{eq:inversion}. At $k=0$, this gives
\[
 \Pp_n(N=0)=1+\sum_{j=1}^{\lfloor n/\ell\rfloor}(-1)^jB_{q,j}(n,r).
\]
Taking the complementary event therefore yields
\[
 \Pp_n(N\geq1)=1-\Pp_n(N=0)
       =\sum_{j=1}^{\lfloor n/\ell\rfloor}(-1)^{j-1}B_{q,j}(n,r),
\]
which is~\eqref{eq:positive-count}.
\end{proof}

The homotopy events now have explicit orbit descriptions. An odd sphere occurs when the realized fraction lies in the corresponding open interval, regardless of the positive orbit count. A wedge of $m\geq1$ critical even spheres requires exactly $m+1$ orbits. The contractible event is the disjoint union of the events $\{N_{c_t}=1\}$ over $0\leq t\leq\lfloor(n-1)/2\rfloor$. In particular, contractibility cannot be computed from fixed points alone at scales above $1/3$.

\begin{figure}[!t]
\centering
\begin{tikzpicture}[cv every]
 \begin{scope}[xshift=1.5cm,yshift=1.45cm]
  \draw[cv circle] (0,0) circle (1.05);
  \foreach \i/\x in {1/.12,2/.67,3/.39,4/.91} {
   \coordinate (p\i) at ({90-360*\x}:1.05);
   \fill[cvblue] (p\i) circle (2pt);
   \node[font=\footnotesize] at ({90-360*\x}:1.42) {$X_{\i}(\omega)$};
  }
  \draw[-{Stealth[length=1.8mm]},draw=black!60] (75:.76) arc[start angle=75,end angle=30,radius=.76];
  \node[align=center,font=\footnotesize] at (0,-1.65) {anchor at $X_1(\omega)$};
 \end{scope}
 \draw[-{Stealth[length=2mm]},line width=.7pt] (3.15,1.55)--(4.0,1.55);
 \begin{scope}[xshift=4.7cm,yshift=1.45cm,x=6cm]
  \draw[cv axis] (0,0)--(1.08,0);
  \foreach \x/\lab in {0/{y_0(\omega)=0},.27/{y_1(\omega)},.55/{y_2(\omega)},.79/{y_3(\omega)},1/{y_4(\omega)=1}} {
   \draw (\x,-.07)--(\x,.07);
   \node[below=5pt,font=\footnotesize] at (\x,0) {$\lab$};
  }
  \foreach \x/\lab in {0/{X_1(\omega)},.27/{X_3(\omega)},.55/{X_2(\omega)},.79/{X_4(\omega)}} {
   \fill[cvblue] (\x,0) circle (2pt);
   \node[above=5pt,font=\footnotesize] at (\x,0) {$\lab$};
  }
  \draw[draw=cvblue,line width=1.5pt] (0,0)--(.27,0);
  \draw[draw=cvorange,line width=1.5pt] (.27,0)--(.55,0);
  \draw[draw=cvblue,line width=1.5pt] (.55,0)--(.79,0);
  \draw[draw=cvorange,line width=1.5pt] (.79,0)--(1,0);
  \foreach \a/\b/\i in {0/.27/0,.27/.55/1,.55/.79/2,.79/1/3} {
   \draw[decorate,decoration={brace,mirror,amplitude=3pt}] (\a,-.65)--(\b,-.65);
   \node[below=4pt] at ({(\a+\b)/2},-.65) {$h_{\i}$};
  }
  \node[align=center,font=\footnotesize] at (.5,-1.85)
   {$H_4(\omega)=(h_0,h_1,h_2,h_3),\quad h_i=y_{i+1}(\omega)-y_i(\omega)$};
 \end{scope}
\end{tikzpicture}
\caption{Construction of the anchored gap map by fixing the first sample point as the anchor and taking consecutive differences of the ordered relative positions.}
\label{fig:gaps}
\end{figure}

\section{Circular gaps and exact orbit moments}\label{sec:moments}
\subsection{Coordinate maps and the gap law}
For $M\geq2$, let
\[
 \Delta_{M-1}=\left\{h\in\R_{\geq0}^{M}:
                          \sum_{i=0}^{M-1}h_i=1\right\},\qquad
 D_M=\left\{v\in\R_{\geq0}^{M-1}:
                          \sum_{i=0}^{M-2}v_i\leq1\right\}.
\]
Define the coordinate chart and its reference measure by
\begin{equation}\label{eq:coordinate-measure}
 \begin{split}
 \phi_M:D_M&\longrightarrow\Delta_{M-1},\\
 (v_0,\ldots,v_{M-2})&\longmapsto
 \left(v_0,\ldots,v_{M-2},1-\sum_{i=0}^{M-2}v_i\right),\\
 \eta_M&=(\phi_M)_*(\Leb_{M-1}|_{D_M}).
 \end{split}
\end{equation}
Here the subscript $*$ denotes pushforward. Thus $\eta_M$ is Lebesgue measure in independent gap coordinates; it is not Euclidean hypersurface measure in $\R^M$.

For an ordered sample $\omega=(\omega_1,\ldots,\omega_M)\in\Omega_M=[0,1)^M$, fix the first sample point $X_1(\omega)=\pi(\omega_1)$ as the anchor. Let $y_1(\omega)\leq\cdots\leq y_{M-1}(\omega)$ be the nondecreasing rearrangement of
\[
 \bigl((\omega_j-\omega_1)\bmod 1\bigr)_{j=2}^{M},
\]
where each relative coordinate is represented in $[0,1)$. Set $y_0(\omega)=0$ and $y_M(\omega)=1$, and define the anchored gap map by
\begin{equation}\label{eq:sample-gap-map}
 H_M:\Omega_M\longrightarrow\Delta_{M-1},\qquad
 H_M(\omega)=\bigl(y_{i+1}(\omega)-y_i(\omega)\bigr)_{i=0}^{M-1}.
\end{equation}
This defines a Borel map on the ordered sample space, with the anchor prescribed by its first coordinate. Figure~\ref{fig:gaps} illustrates this construction.

\begin{lemma}\label{lem:gap-law}
If $\nu_M=(H_M)_*\Pp_M$, then
\begin{equation}\label{eq:gap-density}
 \frac{d\nu_M}{d\eta_M}(h)=(M-1)!
 \qquad\text{for }\eta_M\text{-almost every }h\in\Delta_{M-1}.
\end{equation}
\end{lemma}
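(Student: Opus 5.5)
The plan is to reduce to the classical fact that the order statistics of $M-1$ independent uniform points on $[0,1)$ have density $(M-1)!$ on the simplex $\{0\le y_1\le\cdots\le y_{M-1}\le 1\}$, and then transport this density to gap coordinates. We expect this to be a routine change of variables; the only care needed is with null sets and with the convention that $\eta_M$ lives in the chart coordinates.

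\emph{Removing the anchor.} Put $u_j=(\omega_j-\omega_1)\bmod 1$ for $2\le j\le M$. For each fixed $\omega_1$, the map $\omega_j\mapsto(\omega_j-\omega_1)\bmod 1$ preserves $\Leb_1|_{[0,1)}$, since it is a rotation of the circle written in a fundamental domain. By Fubini, under $\Pp_M$ the vector $(u_2,\ldots,u_M)$ is therefore uniform on $[0,1)^{M-1}$. It is independent of $\omega_1$, although we only need its law. The map $H_M$ depends on $\omega$ only through $(u_2,\ldots,u_M)$. Hence $\nu_M$ is the pushforward of $\Leb_{M-1}|_{[0,1)^{M-1}}$ under
\[
 u\mapsto\bigl(y_{i+1}-y_i\bigr)_{i=0}^{M-1},
\]
where $y_1\le\cdots\le y_{M-1}$ is the sorted $u$, $y_0=0$ and $y_M=1$.

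\emph{Sorting.} Off the null set where two of the $u_j$ coincide, $[0,1)^{M-1}$ splits into $(M-1)!$ open regions, one for each permutation. On each region the sorting map is a linear isometry onto the open ordered simplex $\{0<y_1<\cdots<y_{M-1}<1\}$. So the law of $(y_1,\ldots,y_{M-1})$ has density $(M-1)!$ with respect to Lebesgue measure on that simplex.

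\emph{Gap coordinates.} Set $v_i=y_{i+1}-y_i$ for $0\le i\le M-2$. This is a linear map with unit lower-triangular inverse $y_k=\sum_{i<k}v_i$, so its Jacobian is $1$. It maps the ordered simplex bijectively onto the interior of $D_M$, and the last gap is $h_{M-1}=1-y_{M-1}=1-\sum_{i\le M-2}v_i$. Hence $H_M$ equals $\phi_M$ applied to $(v_0,\ldots,v_{M-2})$, and that vector has density $(M-1)!$ with respect to $\Leb_{M-1}|_{D_M}$. Pushing forward by $\phi_M$, which is injective on $D_M$ by its definition, gives $\nu_M=(M-1)!\,\eta_M$. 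This is \eqref{eq:gap-density}, holding almost everywhere because the discarded sets (coincidences and the boundary of $D_M$) are null.

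The only point needing slight care is the first step: checking that the reduction modulo $1$ yields exactly uniform relative coordinates, so that the anchored construction introduces no bias. This follows from translation invariance of Lebesgue measure on $\R/\Z$.
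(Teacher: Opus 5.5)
Your proposal is correct and follows essentially the same route as the paper. Both arguments use translation invariance to make the relative coordinates independent uniforms, then the $(M-1)!$ permutation count for the density on the ordered simplex, and finally the unit-Jacobian affine change to gap coordinates in the chart $\phi_M$.
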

\begin{proof}
To separate the anchor from the relative positions, define
\begin{equation}\label{eq:anchor-map}
 \Theta_M:(S^1)^M\longrightarrow S^1\times(S^1)^{M-1},\qquad
 (x_1,\ldots,x_M)\longmapsto
 (x_1,x_2-x_1,\ldots,x_M-x_1).
\end{equation}
For each fixed $x_1$, translation by $-x_1$ preserves the product uniform law of the remaining coordinates. Hence $\Theta_M$ preserves product uniform measure, and the anchor is independent of the $M-1$ relative coordinates, which are independent uniforms.

For the change from ordered positions to gaps, set
\begin{equation}\label{eq:cumulative-map}
 \begin{split}
 \mathcal O_M&=\{y\in\R^{M-1}:0\leq y_1\leq\cdots\leq y_{M-1}\leq1\},\\
 \Gamma_M:\mathcal O_M&\longrightarrow\Delta_{M-1},\qquad
 \Gamma_M(y)=(y_{i+1}-y_i)_{i=0}^{M-1}\qquad (y_0=0,\ y_M=1).
 \end{split}
\end{equation}
There are $(M-1)!$ possible strict orders of the relative coordinates. Each is carried to $\mathcal O_M$ by a coordinate permutation, so the ordered density is $(M-1)!$ with respect to $\Leb_{M-1}$. The map $\Gamma_M$ is an affine bijection and has absolute Jacobian one in the independent coordinates of~\eqref{eq:coordinate-measure}. Since $H_M(\omega)=\Gamma_M(y_1(\omega),\ldots,y_{M-1}(\omega))$, this proves~\eqref{eq:gap-density}. In particular, $\eta_M(\Delta_{M-1})=1/(M-1)!$.
\end{proof}

For a fixed index set $L=\{b_1<\cdots<b_M\}\subseteq[n]$ with $M\geq2$, define the coordinate projection by
\begin{equation}\label{eq:projection}
 \pr_L:\Omega_n\longrightarrow\Omega_M,\qquad
 \pr_L(\omega)=(\omega_{b_1},\ldots,\omega_{b_M}).
\end{equation}
Then $(\pr_L)_*\Pp_n=\Pp_M$, so $H_M\circ \pr_L$ has law $\nu_M$. This is the marginal law for a fixed index set, without conditioning on its points being periodic.

\subsection{Unions of periodic orbits}
Fix $q=w/\ell\in\Qn$ and $1\leq j\leq \lfloor n/\ell\rfloor$, and put $M=j\ell$ and $s=jw$. The following specialization of the regular-core description in~\cite[Example~2.2 and Lemma~2.3]{AAM} is the counting input.

\begin{lemma}\label{lem:shift}
Let $A$ be an $M$-point subset of a distinct-point sample $X$, with increasing lifts $(z_i)_{i\in\Z}$ satisfying $z_{i+M}=z_i+1$. Then $A$ is a union of $j$ periodic orbits of the full-sample map with fraction $q$ if and only if
\begin{equation}\label{eq:shift}
 F_r(z_i)=z_{i+s}\qquad(i\in\Z).
\end{equation}
\end{lemma}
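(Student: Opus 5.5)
The plan is to prove the two implications separately, working entirely with the lifted map $F_r$ on $\widetilde X$ and the lifted enumeration $(z_i)$ of $\pi^{-1}(A)$. Write $\sigma(i)=i+s$. Since $\gcd(w,\ell)=1$, we have $\gcd(s,M)=j\gcd(w,\ell)=j$. Hence the residues mod $M$ split into exactly $j$ cycles of $i\mapsto i+s$, each of length $M/j=\ell$.

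\emph{Sufficiency.} Assume \eqref{eq:shift}. Then $F_r$ maps $\pi^{-1}(A)$ into itself, and iterating gives $F_r^{\ell}(z_i)=z_{i+\ell s}=z_{i+jw\ell}$. Since $\ell s=jw\ell=w\cdot M$, this equals $z_i+w$. So every point of $A$ is periodic for $f_r$. The orbit of $\pi(z_i)$ is $\{\pi(z_{i+ks}):0\le k<\ell\}$, whose indices form one of the $j$ residue cycles; these have $\ell$ distinct elements mod $M$. The least period is therefore exactly $\ell$, and $F_r^\ell(\tilde x)=\tilde x+w$ with $w/\ell$ reduced. Thus $A$ is a disjoint union of $j$ periodic orbits of fraction $q$.

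\emph{Necessity.} Assume $A$ is a union of $j$ periodic orbits of fraction $q$. Then $F_r$ restricts to a bijection of $\pi^{-1}(A)$ that commutes with $+1$. Monotonicity of $F_r$ (the maximal eligible index is nondecreasing) makes this restriction nondecreasing, hence strictly increasing because it is injective. A strictly increasing bijection of $\Z$ commuting with $i\mapsto i+M$ is a translation. So there is an integer $c$ with $F_r(z_i)=z_{i+c}$ for all $i$. Applying this $\ell$ times gives $z_{i+\ell c}=z_i+w=z_{i+wM}$, so $\ell c=wM=wj\ell$. Therefore $c=jw=s$, which is \eqref{eq:shift}.

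\emph{Main obstacle.} The delicate step is the translation claim. It needs $F_r$ to be nondecreasing on all of $\widetilde X$, not only on $\pi^{-1}(A)$, and the $j$ orbits must be interleaved consistently. Monotonicity of the full map gives this for free. I would first check directly that an increasing bijection $g$ of $\Z$ with $g(i+M)=g(i)+M$ satisfies $g(i+1)=g(i)+1$. This holds because the increasing bijection leaves no room to skip indices. I would also cite \cite[Lemma~2.3]{AAM} for the common $(w,\ell)$ and for the fact that periodic orbits are disjoint.
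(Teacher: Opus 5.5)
Your proof is correct and follows essentially the same route as the paper. In both, monotonicity and injectivity of $F_r$ restricted to $\pi^{-1}(A)$ force a translation $i\mapsto i+c$, and the converse direction counts the $\gcd(jw,j\ell)=j$ cycles of the shift; the only cosmetic difference is that you pin down $c=jw$ by iterating $\ell$ times ($\ell c=wM$), whereas the paper computes the winding fraction $c/M$ of the shift's cycles and equates it with $q$.
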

\begin{proof}
Suppose that $A$ is a union of $j$ periodic orbits of fraction $q$. Then $F_r$ restricts to a bijection of its lifted points. Write $F_r(z_i)=z_{g(i)}$. The degree-one identity gives $g(i+M)=g(i)+M$, so telescoping yields
\[
 \sum_{i=0}^{M-1}\bigl(g(i+1)-g(i)\bigr)=g(M)-g(0)=M.
\]
Order preservation and injectivity give $g(i+1)-g(i)\geq1$ for every $i\in\Z$. Hence all $M$ summands equal one. The identity $g(i+M)=g(i)+M$ extends this to every $i\in\Z$, and therefore $g(i)=i+s'$ for a fixed integer $s'$.

For the shift $i\mapsto i+s'$ modulo $M$, a return after $k\geq1$ iterations occurs if and only if $M\mid ks'$. The least such $k$ is $M/\gcd(M,s')$, so there are $\gcd(M,s')$ cycles of this period. Since an advance by $M$ indices gives one full turn, each cycle has winding number $s'/\gcd(M,s')$. Its winding fraction is thus
\[
 \frac{s'/\gcd(M,s')}{M/\gcd(M,s')}=\frac{s'}M.
\]
By the given fraction and the definitions $M=j\ell$ and $s=jw$,
\[
 \frac{s'}M=q=\frac w\ell=\frac sM.
\]
Consequently $s'=s=jw$, proving~\eqref{eq:shift}.

Conversely, a shift by $jw$ on $j\ell$ indices has $j$ cycles, since $\gcd(w,\ell)=1$. Each has period $\ell$ and winding number $w$.
\end{proof}
The selected union need not contain every periodic orbit of the sample. Once the full map is fixed, any $j$ of its $q$-orbits can be chosen, provided that at least $j$ exist.

For integers $M\geq2$ and $0\leq s<M/2$, let $A$ be an $M$-point set with increasing lifts $z_i$ satisfying $z_{i+M}=z_i+1$. Write $h=(h_0,\ldots,h_{M-1})\in\Delta_{M-1}$, where $h_i=z_{i+1}-z_i$ for $0\leq i\leq M-1$, and extend the gap indices modulo $M$. Then, for $0\leq i\leq M-1$,
\[
 z_{i+s}-z_i=\sum_{v=0}^{s-1}h_{i+v},\qquad
 z_{i+s+1}-z_i=\sum_{v=0}^{s}h_{i+v}.
\]
Applying the definition of $F_r$ in~\eqref{eq:farthest} to the point set $A$, the image of $z_i$ is $z_b$ for the largest $b\geq i$ with $z_b-z_i<r$. Since the lifts are increasing, this index is $i+s$ if and only if $z_{i+s}$ satisfies the bound and $z_{i+s+1}$ does not. Thus, for $i=0,\ldots,M-1$,
\[
 z_{i+s}-z_i<r\leq z_{i+s+1}-z_i
 \quad\Longleftrightarrow\quad
 \sum_{v=0}^{s-1}h_{i+v}<r\leq\sum_{v=0}^{s}h_{i+v}.
\]
Accordingly, define the region of gaps satisfying these successor conditions by
\begin{equation}\label{eq:successor-region}
 \mathcal S_{M,s}(r)=\left\{h\in\Delta_{M-1}:
 \sum_{v=0}^{s-1}h_{i+v}<r\leq\sum_{v=0}^{s}h_{i+v}
 \text{ for }i=0,\ldots,M-1\right\}.
\end{equation}
Gap indices are modulo $M$, and the left sum is zero when $s=0$. The anchored gap law gives
\begin{equation}\label{eq:gap-probability}
 \Pp_M\{H_M\in\mathcal S_{M,s}(r)\}
 =\nu_M(\mathcal S_{M,s}(r))
 =(M-1)!\,\eta_M(\mathcal S_{M,s}(r)).
\end{equation}

\begin{lemma}\label{lem:gap-support}
Let $M\geq2$ and $0\leq s<M/2$ be integers, and let $0<r<1/2$. Then $\nu_M(\mathcal S_{M,s}(r))>0$ if and only if $r\in I_{M,s}$.
\end{lemma}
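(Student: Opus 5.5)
The plan is to show that the two families of inequalities defining $\mathcal S_{M,s}(r)$ in \eqref{eq:successor-region}, once summed over the cyclic index $i$, pin $r$ into the closed interval $[s/M,(s+1)/M]$. Conversely, in the open interval, the equal-gap point is an interior point of the region. Throughout I use \eqref{eq:gap-probability}: by Lemma~\ref{lem:gap-law}, $\nu_M=(M-1)!\,\eta_M$ with positive constant density. So $\nu_M(\mathcal S_{M,s}(r))>0$ if and only if $\mathcal S_{M,s}(r)$ has positive $\eta_M$-measure, that is, positive $(M-1)$-dimensional Lebesgue measure in the chart $\phi_M$.

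\emph{Necessity.} Let $h\in\mathcal S_{M,s}(r)$. Summing the left inequalities $\sum_{v=0}^{s-1}h_{i+v}<r$ over $i=0,\ldots,M-1$ counts each gap exactly $s$ times, because indices are taken modulo $M$ and $s<M$. This gives $s=s\sum_i h_i<Mr$, so $r>s/M$. When $s=0$ this is just $r>0$, which holds by assumption. Summing the right inequalities $r\leq\sum_{v=0}^{s}h_{i+v}$ counts each gap $s+1$ times, so $Mr\leq s+1$. Hence $\mathcal S_{M,s}(r)$ is empty unless $s/M<r\leq (s+1)/M$.

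The only delicate case is the endpoint $r=(s+1)/M$, which I must show gives a null set. There the summed inequality is an equality, so every right inequality is an equality: $\sum_{v=0}^{s}h_{i+v}=r$ for all $i$. In particular, $h$ satisfies the nontrivial affine equation $\sum_{v=0}^{s}h_v=r$ within $\Delta_{M-1}$. This equation is nontrivial in the chart coordinates, since it involves only $s+1<M$ of the gaps, and it is not implied by $\sum_i h_i=1$. So $\mathcal S_{M,s}(r)$ is contained in a hyperplane section of $D_M$ and is $\eta_M$-null. Together with $r<1/2$, this proves that positive measure forces $r\in I_{M,s}$.

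\emph{Sufficiency.} Let $r\in I_{M,s}$ and take the equal-gap point $h^\ast=(1/M,\ldots,1/M)$. At $h^\ast$ the block sums are $s/M$ and $(s+1)/M$, so both defining inequalities hold strictly. All $2M$ strict inequalities are open conditions on $h$, and $h^\ast$ lies in the relative interior of $\Delta_{M-1}$ because $M\geq2$. So a small relative neighbourhood of $h^\ast$ in $\Delta_{M-1}$ lies in $\mathcal S_{M,s}(r)$. Its preimage under $\phi_M$ is a nonempty open subset of $D_M$, hence has positive Lebesgue measure. By \eqref{eq:gap-probability}, $\nu_M(\mathcal S_{M,s}(r))>0$.

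The only step needing care is the endpoint case in necessity, handled by noting that equality propagates to each summand. Everything else is the double-counting identity.
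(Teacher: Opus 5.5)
Your proposal is correct and follows the paper's proof essentially step for step. You sum the successor inequalities cyclically to get $s<Mr\leq s+1$, dispose of the endpoint $Mr=s+1$ by forcing equality in each right-hand inequality and hence membership in the null hyperplane $h_0+\cdots+h_s=r$, and use the equal-gap vector as a strict interior point for sufficiency.
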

\begin{proof}
Recall from~\eqref{eq:support} that $I_{M,s}=(s/M,(s+1)/M)\cap(0,1/2)$. Summing the successor inequalities in~\eqref{eq:successor-region} over $i=0,\ldots,M-1$ gives
\[
 s<Mr\leq s+1.
\]
If $Mr=s+1$, every right-hand inequality must be an equality. In particular, the gap vector lies in the proper affine hyperplane
\[
 h_0+\cdots+h_s=r,
\]
which has $\nu_M$-measure zero. Conversely, if $s<Mr<s+1$, the uniform gap vector satisfies all the successor inequalities strictly. A relative open neighborhood therefore satisfies them as well and has positive $\nu_M$-measure. Thus $\nu_M(\mathcal S_{M,s}(r))>0$ if and only if $r\in I_{M,s}$.
\end{proof}

\subsection{Preserving the selected orbits}\label{subsec:preserving-orbits}
For a fixed index set $L=\{b_1<\cdots<b_M\}\subseteq[n]$ of size $M\geq2$, we shall test whether the sample points indexed by $L$ form a union of periodic orbits of the full-sample map. Let $\mathcal A_L=\sigma(X_b:b\in L)$ be the $\sigma$-algebra generated by these positions, and define
\[
 E_L=\{\omega\in\Omega_n:
          H_M(\pr_L(\omega))\in\mathcal S_{M,s}(r)\}
       \in\mathcal A_L.
\]
Then $\Pp_n(E_L)=\nu_M(\mathcal S_{M,s}(r))$. The inequalities in~\eqref{eq:successor-region} force $h_{i+s}>0$ for every $i$, so the selected points are distinct on $E_L$. On this event, choose their increasing lifts $z_i$, anchored at $X_{b_1}$, and extend them periodically. Define the random subset of $S^1$ on all of $\Omega_n$ by
\begin{equation}\label{eq:avoidance-arcs}
 V_L(\omega)=
 \begin{cases}
 \displaystyle\bigcup_{i=0}^{M-1}\pi\bigl((z_{i+s},z_i+r)\bigr),
                                      &\text{if }\omega\in E_L,\\
 \varnothing,&\text{if }\omega\notin E_L.
 \end{cases}
\end{equation}
Its endpoints are $\mathcal A_L$-measurable. These are precisely the arcs into which an additional point cannot be inserted if the prescribed successors are to remain unchanged.

\begin{lemma}\label{lem:avoidance}
On $E_L$ the arcs in~\eqref{eq:avoidance-arcs} are disjoint and
\begin{equation}\label{eq:avoidance-length}
 \mu(V_L)=Mr-s.
\end{equation}
If $J_L=\{\omega:X_b(\omega)\notin V_L(\omega)\text{ for every }b\notin L\}$, then
\begin{equation}\label{eq:avoidance-conditional}
 \Pp_n(J_L\mid\mathcal A_L)=(1-\mu(V_L))^{n-M}
 \qquad\Pp_n\text{-almost surely}.
\end{equation}
When $n=M$ the conditional probability in~\eqref{eq:avoidance-conditional} is identically one.
\end{lemma}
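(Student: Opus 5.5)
The plan has three steps: show the arcs are disjoint and sum their lengths, then get the conditional probability from independence of the points outside $L$ from $\mathcal A_L$.

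\emph{Disjointness and length.} Work on $E_L$, where the gap vector satisfies $\sum_{v=0}^{s-1}h_{i+v}<r\leq\sum_{v=0}^{s}h_{i+v}$ for every $i$. In lifted coordinates these read $z_{i+s}<z_i+r\leq z_{i+s+1}$. So the $i$-th open interval $(z_{i+s},z_i+r)$ is nonempty and contained in $(z_{i+s},z_{i+s+1}]$; since it is open, it lies in $(z_{i+s},z_{i+s+1})$. For $i=0,\ldots,M-1$ the indices $i+s$ run over a complete residue system modulo $M$. Hence these intervals sit inside the $M$ distinct open gap intervals $(z_k,z_{k+1})$ of one fundamental period $[z_s,z_s+1)$, one interval per gap. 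Their images under $\pi$ are therefore pairwise disjoint, and $\pi$ is injective on each of them because each has length less than one. It follows that
\[
\mu(V_L)=\sum_{i=0}^{M-1}(z_i+r-z_{i+s})=Mr-\sum_{i=0}^{M-1}\sum_{v=0}^{s-1}h_{i+v}=Mr-s,
\]
because each $h_k$ appears exactly $s$ times in the double sum and $\sum_k h_k=1$. The only point needing care is the index bookkeeping modulo $M$ together with the periodic extension $z_{i+M}=z_i+1$. Off $E_L$ we have $V_L=\varnothing$, so the formula is only claimed on $E_L$.

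\emph{Conditional probability.} The coordinates $\omega_b$ for $b\notin L$ are independent of $\mathcal A_L$ under the product measure. Moreover, $V_L$ is determined by $(\omega_b)_{b\in L}$ through Borel endpoint maps, so $\{(\omega,x):\pi(x)\in V_L(\omega)\}$ is jointly measurable. I would invoke the standard freezing lemma: for a bounded jointly measurable $\Phi$, with $Y$ being $\mathcal A_L$-measurable and $Z$ independent of $\mathcal A_L$, one has $\E[\Phi(Y,Z)\mid\mathcal A_L]=g(Y)$, where $g(y)=\E\,\Phi(y,Z)$. Here $Z=(\omega_b)_{b\notin L}$ is uniform on $[0,1)^{n-M}$ and $\Phi$ is the indicator that no coordinate of $Z$ projects into the frozen set $V$. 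Then $g=(1-\mu(V))^{n-M}$ by independence of the coordinates of $Z$ and the fact that each is uniform, so $\Pp(\pi(\omega_b)\notin V)=1-\mu(V)$. This yields~\eqref{eq:avoidance-conditional} almost surely. On the complement of $E_L$ it reads $1=1$, consistent with $V_L=\varnothing$. When $n=M$ there are no indices outside $L$, so $J_L=\Omega_n$ and both sides equal one.

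The main obstacle is minor: verifying measurability of the random set so the freezing lemma applies. I would handle it by writing $\{\pi(x)\in V_L\}$ as a finite union of sets defined by Borel inequalities between $x$ and the order statistics of the anchored coordinates, which are Borel functions of $\pr_L(\omega)$.
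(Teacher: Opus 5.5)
Your proposal is correct and follows essentially the same route as the paper. It places the $i$th arc inside the gap after $z_{i+s}$, with $i+s$ running through all residues modulo $M$, to get disjointness; it uses the double-sum count to get $\mu(V_L)=Mr-s$; and it conditions on $\mathcal A_L$, under which the coordinates outside $L$ remain product-uniform. Your explicit freezing-lemma and measurability remarks only spell out what the paper compresses into the statement that the complement of $V_L$ is a fixed Borel set under the regular conditional law.
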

\begin{proof}
On $E_L$, $z_{i+s}<z_i+r\leq z_{i+s+1}$. Thus the $i$th arc lies in the selected gap immediately after $z_{i+s}$. The target indices $i+s$ run through all residues, so the arcs are disjoint. Summing their lengths gives
\[
 \begin{aligned}
 \mu(V_L)
 &=\sum_{i=0}^{M-1}(z_i+r-z_{i+s})
   =\sum_{i=0}^{M-1}\left(r-\sum_{v=0}^{s-1}h_{i+v}\right)\\
 &=Mr-\sum_{v=0}^{s-1}\sum_{i=0}^{M-1}h_{i+v}
   =Mr-s\sum_{i=0}^{M-1}h_i=Mr-s.
 \end{aligned}
\]
For each $v\in\{0,\ldots,s-1\}$, addition of $v$ modulo $M$ permutes the gap indices, so the inner sum in the second line equals $\sum_{i=0}^{M-1}h_i=1$. Summing over the $s$ values of $v$ gives the last two equalities.

Given $\mathcal A_L$, the coordinates outside $L$ retain their product-uniform law. Since $V_L$ depends only on the coordinates in $L$, its complement is a fixed Borel subset of $S^1$ under this regular conditional law. The conditional product measure of $(S^1\setminus V_L)^{n-M}$ is $(1-\mu(V_L))^{n-M}$, proving~\eqref{eq:avoidance-conditional}. Outside $E_L$, the conditional probability is one because $V_L=\varnothing$.
\end{proof}
This conditional empty-arc argument is related to the calculations in~\cite[Section~3.1 and Appendix~A]{AAM}. Here the selected whole-orbit structure makes $\mu(V_L)$ independent of the selected shape.

Figure~\ref{fig:persistence} illustrates the arcs defining $V_L$ in~\eqref{eq:avoidance-arcs} within the selected gaps.

\begin{figure}[tb]
\centering
\begin{tikzpicture}[cv every]
 \begin{scope}[xshift=1.8cm,yshift=1.9cm]
  \draw[cv circle] (0,0) circle (1.4);
  \foreach \i/\x in {0/0,1/.08,2/.34,3/.46,4/.67,5/.84}
   \coordinate (a\i) at ({90-360*\x}:1.4);
  \foreach \a/\b in {0/.03,.34/.36,.67/.70}
   \draw[draw=cvred,line width=3pt] ({90-360*\a}:1.4)
     arc[start angle={90-360*\a},end angle={90-360*\b},radius=1.4];
  \foreach \i/\j in {1/2,3/4,5/1} \draw[cv transient] (a\i)--(a\j);
  \foreach \i/\j in {0/2,2/4,4/0} \draw[cv orbit] (a\i)--(a\j);
  \foreach \i in {1,3,5} \fill[black!48] (a\i) circle (1.8pt);
  \foreach \i in {0,2,4} \fill[cvblue] ($(a\i)+(-2pt,-2pt)$) rectangle ($(a\i)+(2pt,2pt)$);
  \node[align=center] at (0,1.95) {orbits and additional points};
  \node[font=\footnotesize] at (0,-1.95) {$\mu(V_L)=Mr-s$};
 \end{scope}
 \begin{scope}[xshift=4.85cm,yshift=2.25cm]
  \node[align=center] at (2.6,1.65) {a selected gap in $\mathbb R$};
  \draw[line width=.6pt] (0,0)--(5.2,0);
  \draw[cv axis] (5.2,0)--(5.6,0);
  \draw[draw=cvred,line width=3pt] (0,0)--(2.25,0);
  \foreach \x in {0,5.2} \fill[cvblue] (\x,0) circle (2.2pt);
  \draw[dashed] (2.25,-.55)--(2.25,.45);
  \node[below=6pt] at (0,0) {$z_{i+s}$};
  \node[below=18pt] at (2.25,0) {$z_i+r$};
  \node[below=6pt] at (5.2,0) {$z_{i+s+1}$};
  \draw[draw=cvred,line width=1pt] (1.0,.40)--(1.14,.54) (1.0,.54)--(1.14,.40);
  \draw[draw=cvred,dotted] (1.07,.40)--(1.07,.07);
  \fill[black!65] (3.7,0) circle (1.9pt);
  \node[above=9pt,font=\footnotesize,text=cvred] at (1.07,.54) {changes successor};
  \node[above=9pt,font=\footnotesize] at (4.0,.54) {preserves successor};
  \draw[decorate,decoration={brace,mirror,amplitude=4pt}] (0,-1.35)--(2.25,-1.35);
  \node[below=7pt,align=center] at (1.125,-1.35) {$r-(z_{i+s}-z_i)$};
 \end{scope}
\end{tikzpicture}
\caption{The sample and successor relations from Figure~\ref{fig:dynamics}(a), with $V_L$ in red. Gray nonperiodic points lie outside $V_L$; the enlarged gap shows where an additional point changes or preserves a selected successor.}
\label{fig:persistence}
\end{figure}

Related moment expansions for spacing counts and circular coverage inequalities appear in~\cite[Examples~2--3]{HLpre}. The next proposition computes these binomial moments from the gap measure and the constant avoidance factor in Lemma~\ref{lem:avoidance}.

\begin{proposition}\label{prop:moments}
For $n\geq2$, $0<r<1/2$, $q=w/\ell\in\Qn$ and $1\leq j\leq \lfloor n/\ell\rfloor$, put $M=j\ell$ and $s=jw$. If $M\geq2$, the binomial moments in~\eqref{eq:binomial-moments} satisfy
\begin{equation}\label{eq:orbit-moment}
 B_{q,j}(n,r)=
 \begin{cases}
  \displaystyle\binom nM(1-Mr+s)^{n-M}
       \nu_M(\mathcal S_{M,s}(r)),
       &\text{if }r\in I_{M,s},\\[3pt]
  0,&\text{if }r\notin I_{M,s}.
 \end{cases}
\end{equation}
For $q=0$, the binomial moments are given explicitly by
\[
 B_{0,M}(n,r)=\binom nM(1-Mr)_+^{n-1},
 \qquad 1\leq M\leq n.
\]
\end{proposition}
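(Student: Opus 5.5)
The plan is to write $\binom{N_q}{j}$ as a sum of indicators over $M$-element index sets, and then apply Lemma~\ref{lem:shift} and Lemma~\ref{lem:avoidance} to each term. Fix $L\subseteq[n]$ with $|L|=M$. On the distinct-point locus, $\binom{N_q}{j}$ counts the $j$-element families of $q$-orbits. Each such family has union an $M$-point set, and distinct families have distinct unions because orbits are disjoint. Hence
\[
 \binom{N_q(n,r)}{j}=\sum_{|L|=M}\ind\{\text{the points indexed by }L\text{ form a union of }j\text{ periodic orbits of fraction }q\}
\]
almost surely. Since the law is exchangeable, each of the $\binom nM$ summands has the same expectation. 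The task is therefore to show that the probability of a single event $G_L$ equals $(1-Mr+s)^{n-M}\nu_M(\mathcal S_{M,s}(r))$.

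The core step is the set identity $G_L=E_L\cap J_L$, up to null events. By Lemma~\ref{lem:shift}, $G_L$ holds iff the full-sample map satisfies $F_r(z_i)=z_{i+s}$ for every $i$. Consider an increasing lift $z_i$ of a selected point. Its full-sample successor is $z_{i+s}$ exactly when two conditions hold:
\begin{itemize}
\item[(a)] $z_{i+s}-z_i<r$;
\item[(b)] no sample point $x$ satisfies $z_{i+s}<x<z_i+r$, and $z_{i+s+1}-z_i\ge r$ in case no extra point lies beyond $z_{i+s}$.
\end{itemize}
Condition (b) must be stated carefully, since the farthest point under $r$ might be a nonselected point lying strictly between $z_{i+s}$ and $z_i+r$. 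If such a point exists, the successor is not $z_{i+s}$. If none exists, then $z_{i+s+1}\ge z_i+r$ holds unless $z_{i+s+1}$ itself lies in that arc, which would also change the successor. So the condition on $z_{i+s}$ is: $z_{i+s}-z_i<r$, no selected point lies in $(z_{i+s},z_i+r)$, and no nonselected point lies there. The first two parts are exactly the successor inequalities defining $\mathcal S_{M,s}(r)$, i.e.\ $H_M(\pr_L\omega)\in\mathcal S_{M,s}(r)$, up to the null event of the boundary equality. The third part is the avoidance condition $J_L$. Taking conditional expectation given $\mathcal A_L$ and applying~\eqref{eq:avoidance-conditional} together with the constant value $\mu(V_L)=Mr-s$ from~\eqref{eq:avoidance-length} gives $\Pp_n(E_L\cap J_L)=(1-Mr+s)^{n-M}\Pp_n(E_L)$. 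Equation~\eqref{eq:gap-probability} then finishes the case $r\in I_{M,s}$. For $r\notin I_{M,s}$, Lemma~\ref{lem:gap-support} gives $\nu_M=0$, so the moment vanishes.

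For $q=0$, write $\ell=1$, $M=j$, $s=0$. The case $M=1$ is not covered by the gap lemma, and the case $M\ge2$ needs an explicit volume. A fixed point $x$ satisfies $F_r(x)=x$ iff the open arc $(x,x+r)$ contains no sample point. Hence $M$ selected points are all fixed iff each arc of length $r$ clockwise from a selected point avoids every other point. Rather than route through $\mathcal S_{M,0}$, compute $\Pp(G_L)$ directly as follows. Condition on the selected points, so that they must be pairwise separated clockwise by at least $r$; this is the region $h_i\ge r$ for all $i$. Then the other $n-M$ points must avoid a set of measure $Mr$. The gap region has $\nu_M$-mass $(1-Mr)_+^{M-1}$, by the classical spacing formula: substitute $h_i=r+g_i$ and rescale the simplex. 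Multiplying by $(1-Mr)_+^{n-M}$ gives $(1-Mr)_+^{n-1}$, and for $M=1$ the formula reduces directly to $(1-r)^{n-1}$.

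The main obstacle I expect is rigor in the identity $G_L=E_L\cap J_L$. Three points need care there: the strict versus nonstrict inequality at $z_i+r$, which is null for fixed $r$; nonselected points coinciding with arc endpoints; and the requirement that Lemma~\ref{lem:shift} be applied with the full-sample map while the selected lifts are indexed by $L$ alone.
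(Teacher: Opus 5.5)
Your proposal is correct and follows the paper's proof essentially step for step. The common steps are the decomposition $\binom{N_q}{j}=\sum_{|L|=M}I_L$ via Lemma~\ref{lem:shift}, the identification $I_L=\ind_{E_L\cap J_L}$, the conditional avoidance factor $(1-Mr+s)^{n-M}$ from Lemma~\ref{lem:avoidance}, and Lemma~\ref{lem:gap-support} for $r\notin I_{M,s}$; for $q=0$ the paper likewise computes the volume of $\{h_i\geq r\}$ (literally $\mathcal S_{M,0}(r)$) by rescaling and treats $M=1$ by a separate one-point count. Your explicit check that $F_r(z_i)=z_{i+s}$ is equivalent to the successor inequalities plus an empty arc $(z_{i+s},z_i+r)$ spells out a step the paper only asserts, and it shows the match with $\mathcal S_{M,s}(r)$ is exact on the distinct-point locus, so the boundary null event you worry about is not actually needed.
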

\begin{proof}
For $M\geq2$ and each $M$-element index set $L\subseteq[n]$, use the increasing lifts $(z_i)_{i\in\Z}$ of the sample points indexed by $L$ from~\eqref{eq:avoidance-arcs}, with $z_{i+M}=z_i+1$. On the distinct-point locus, define
\[
 I_L(\omega)=
 \begin{cases}
 1,&\text{if }F_r(z_i)=z_{i+s}\text{ for every }i=0,\ldots,M-1,\\
 0,&\text{otherwise}.
 \end{cases}
\]
Here $F_r$ is the map determined by the full sample $\Xn(\omega)$. Set $I_L=0$ when the sample points are not all distinct. By Lemma~\ref{lem:shift}, $I_L$ indicates whether the points indexed by $L$ form a union of $j$ periodic orbits with fraction $q$. The definitions of $E_L$ and $J_L$ then give
\[
 I_L=\ind_{E_L\cap J_L}=\ind_{E_L}\ind_{J_L}
 \qquad\Pp_n\text{-almost surely}.
\]
Here $\ind_E$ denotes the indicator of the event $E$.
Moreover,
\begin{equation}\label{eq:union-count}
 \binom{N_q(n,r)}j=\sum_{\substack{L\subseteq[n]\\|L|=M}}I_L
 \qquad\Pp_n\text{-almost surely}.
\end{equation}
Indeed, choosing $j$ distinct periodic orbits determines a union of exactly $j\ell=M$ sample indices. Conversely, such a union determines its selected collection of orbits. No ordering or starting point on an orbit is chosen.

For $r\in I_{M,s}$, we have $Mr<s+1$, hence $1-Mr+s>0$. Lemma~\ref{lem:avoidance} implies
\[
 \E_n(I_L\mid\mathcal A_L)
       =\ind_{E_L}(1-Mr+s)^{n-M}.
\]
Taking expectations and applying the tower property gives
\[
 \begin{aligned}
 \E_n I_L
 &=\E_n\bigl[\E_n(I_L\mid\mathcal A_L)\bigr]\\
 &=\E_n\bigl[\ind_{E_L}(1-Mr+s)^{n-M}\bigr]\\
 &=(1-Mr+s)^{n-M}\Pp_n(E_L)\\
 &=(1-Mr+s)^{n-M}\nu_M(\mathcal S_{M,s}(r)).
 \end{aligned}
\]
Taking expectations in~\eqref{eq:union-count} therefore gives
\[
 \begin{aligned}
 B_{q,j}(n,r)
 &=\E_n\sum_{\substack{L\subseteq[n]\\|L|=M}}I_L
   =\sum_{\substack{L\subseteq[n]\\|L|=M}}\E_n I_L\\
 &=\sum_{\substack{L\subseteq[n]\\|L|=M}}
        (1-Mr+s)^{n-M}\nu_M(\mathcal S_{M,s}(r))\\
 &=\binom nM(1-Mr+s)^{n-M}\nu_M(\mathcal S_{M,s}(r)).
 \end{aligned}
\]
A sample with $k\geq j$ periodic orbits of fraction $q$ contributes $\binom{k}{j}$ to this count, since any $j$ of those orbits may be selected.

For $r\notin I_{M,s}$, Lemma~\ref{lem:gap-support} gives $\Pp_n(E_L)=\nu_M(\mathcal S_{M,s}(r))=0$. Since $I_L\leq\ind_{E_L}$ almost surely, every $\E_n I_L$ is zero, and hence $B_{q,j}(n,r)=0$. This proves~\eqref{eq:orbit-moment} for every admissible $s$ when $M\geq2$.

Now suppose $q=0$. Since $q=w/\ell$ is reduced, we have $w=0$ and $\ell=1$, so $s=0$ and $j=M$. For $M\geq2$, the successor inequalities in~\eqref{eq:successor-region} reduce to $h_i\geq r$ for every $i=0,\ldots,M-1$. Hence
\[
 \mathcal S_{M,0}(r)
 =\{h\in\Delta_{M-1}:h_i\geq r
   \text{ for every }i=0,\ldots,M-1\}.
\]
In this case $I_{M,0}=(0,1/M)\cap(0,1/2)$, so $r\in I_{M,0}$ if and only if $Mr<1$. For $Mr<1$, the change of variables
\[
 h_i=r+(1-Mr)v_i,\qquad i=0,\ldots,M-1,
\]
maps $\Delta_{M-1}$ bijectively onto $\mathcal S_{M,0}(r)$. It scales each of the $M-1$ independent gap coordinates by $1-Mr$, so the constant density of $\nu_M$ gives
\[
 \nu_M(\mathcal S_{M,0}(r))=(1-Mr)^{M-1}.
\]
Substituting into~\eqref{eq:orbit-moment} yields
\[
 \begin{aligned}
 B_{0,M}(n,r)
 &=\binom nM(1-Mr)^{n-M}(1-Mr)^{M-1}\\
 &=\binom nM(1-Mr)^{n-1}.
 \end{aligned}
\]
For $Mr\geq1$, we have $r\notin I_{M,0}$, so~\eqref{eq:orbit-moment} gives $B_{0,M}(n,r)=0$. Together these prove the stated zero-winding formula for $M\geq2$.

Finally, if $M=1$, the integer condition $0\leq s<M/2$ forces $s=0$, and $M=j\ell$ gives $j=\ell=1$. Each $X_b$ is fixed if and only if the other $n-1$ points avoid its clockwise arc of length $r$. Summing the fixed-point probabilities over the $n$ sampled points gives
\[
 B_{0,1}(n,r)=n(1-r)^{n-1},
\]
which completes the proof.
\end{proof}

\section{Exact evaluation for all real scales}\label{sec:evaluation}
We evaluate $\nu_M(\mathcal S_{M,s}(r))$ for $M\geq3$ and $1\leq s<M/2$, first establishing its real-parameter geometry, then using lattice counts and interpolation. Combining this evaluation with Proposition~\ref{prop:moments} proves Theorem~\ref{thm:finite}.

General methods for exact probabilities of linear inequalities in uniform spacings are already available in Huffer and Lin~\cite{HL97,HL01}, with further circular-spacing and coverage algorithms in Lin and Lu~\cite{LL}. Thus generic exact computability and piecewise-polynomial dependence are prior; the calculation below uses the particular difference constraints of the orbit region to obtain explicit coefficient formulas and a denominator bound for the possible breakpoints.

\subsection{Difference constraints and vertex coordinates}
Define
\begin{equation}\label{eq:polytope}
 Q_{M,s}(r)=\left\{h\in\Delta_{M-1}:
 \sum_{v=0}^{s-1}h_{i+v}\leq r\leq\sum_{v=0}^{s}h_{i+v}
                     \text{ for }i=0,\ldots,M-1\right\}.
\end{equation}
This is obtained by weakening the strict inequalities in~\eqref{eq:successor-region}. We do not identify it with the closure of that region at every scale: a nonempty lower-dimensional polytope can remain when the strict region is empty. Since the changed boundaries are proper affine hyperplanes,
\begin{equation}\label{eq:polytope-measure}
 \nu_M(\mathcal S_{M,s}(r))=\nu_M(Q_{M,s}(r))
 =(M-1)!\,\eta_M(Q_{M,s}(r)),\qquad 0<r<1/2.
\end{equation}

We now express the defining inequalities of $Q_{M,s}(r)$ in the cumulative coordinates given by the inverse of the gap transformation~\eqref{eq:cumulative-map}. For $h\in Q_{M,s}(r)\subseteq\Delta_{M-1}$, set
\[
 y_0=0,\qquad y_i=\sum_{v=0}^{i-1}h_v\quad(1\leq i\leq M).
\]
Then $y_M=1$ and $y_{i+1}-y_i=h_i$ for $0\leq i\leq M-1$, so
\[
 \Gamma_M^{-1}(h)=(y_1,\ldots,y_{M-1}).
\]
Extend the gap indices modulo $M$ and the cumulative coordinates by $y_{i+M}=y_i+1$ for $i\in\Z$. Telescoping gives, for every $k\geq1$,
\[
 \sum_{v=0}^{k-1}h_{i+v}
 =\sum_{v=0}^{k-1}(y_{i+v+1}-y_{i+v})
 =y_{i+k}-y_i.
\]
Taking $k=s$ and $k=s+1$, the two bounds in~\eqref{eq:polytope} become
\[
 y_{i+s}-y_i\leq r\leq y_{i+s+1}-y_i,
 \qquad i=0,\ldots,M-1.
\]
Equivalently, they are $y_{i+s}-y_i\leq r$ and $y_i-y_{i+s+1}\leq-r$. Gap nonnegativity gives the additional inequalities $y_i-y_{i+1}\leq0$.

After reducing the endpoint indices modulo $M$, the three families have $b-a$ congruent to $s$, $-(s+1)$ and $-1$, respectively. Since $1\leq s<M/2$, two families can give the same ordered index pair only when $M=2s+1$, in which case the first two families coincide in their endpoint pairs. For each repeated pair, their bounds are $C+r$ and $C+1-r$ for an integer $C$. Throughout $0<r<1/2$, the former is strictly smaller, so we discard the latter. This removes the redundant bound without changing the polytope and leaves at most one defining inequality for each ordered pair.

\begin{samepage}
For each retained ordered index pair $(a,b)$, write its defining inequality as
\begin{equation}\label{eq:difference-rows}
 y_b-y_a\leq A_{ab}+B_{ab}r,
 \qquad A_{ab}\in\Z,\quad B_{ab}\in\{-1,0,1\}.
\end{equation}
The constants $A_{ab},B_{ab}$ are fixed by the retained inequality and do not depend on $r$. Here $A_{ab}$ records the periodic integer shift, while $B_{ab}=1,-1,0$ for the first bound, the second bound and gap nonnegativity, respectively.
\end{samepage}
The matrices of $\Gamma_M$ and its inverse in independent coordinates have integer entries and determinant of absolute value one. Related difference-constraint structures are discussed in~\cite[Sections~2.3 and~3.1]{LP}.

\begin{lemma}\label{lem:vertices}
Every vertex of $Q_{M,s}(r)$ has coordinates in $\Z+\Z r$. If $r=A/B$ is rational, all vertices of $BQ_{M,s}(A/B)$ have integer coordinates.
\end{lemma}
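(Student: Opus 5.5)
In the cumulative coordinates $y=(y_1,\ldots,y_{M-1})=\Gamma_M^{-1}(h)$, the polytope $\Gamma_M^{-1}(Q_{M,s}(r))$ is cut out by the difference constraints~\eqref{eq:difference-rows}, with $y_0=0$ and $y_M=1$ treated as constants. Every row therefore has the form $y_b-y_a\leq A_{ab}+B_{ab}r$, where indices are reduced to $\{0,\ldots,M-1\}$ and periodic shifts are absorbed into $A_{ab}$. Rows involving $y_0$ become bounds on a single variable. The plan has three steps: identify vertices by tight rows, show the relevant constraint matrix is totally unimodular, and apply Cramer's rule.

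\emph{Tight rows.} A vertex $y^*$ is determined by $M-1$ linearly independent tight rows, that is, by a nonsingular square system
\[
 Dy=c+c'r,
\]
where the entries of $c$ and $c'$ are integers. Each row of $D$ is $e_b-e_a$, or $\pm e_b$ when the other index is $0$. Equivalently, $D$ is a submatrix of the incidence matrix of a directed graph on the vertex set $\{0,\ldots,M-1\}$ with the column for vertex $0$ deleted. For rows involving $y_0$, the constant $0$, and any periodic shift $y_{i+M}=y_i+1$, only change $c$ by integers.

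\emph{Unimodularity.} Directed-graph incidence matrices are totally unimodular, and deleting a column preserves this. Hence $\det D=\pm1$, and by Cramer's rule $y^*=D^{-1}(c+c'r)$ with $D^{-1}$ an integer matrix. So every coordinate of $y^*$ lies in $\Z+\Z r$. Alternatively, one can argue directly: the tight rows form a spanning tree rooted at $0$, and $y^*_b$ is a signed sum of the row constants along the tree path from $0$ to $b$, which lies in $\Z+\Z r$.

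\emph{Transfer to gaps.} Since $h=\Gamma_M(y)$ is given by integer differences $h_i=y_{i+1}-y_i$ (with $y_0=0$, $y_M=1$) and $\Gamma_M$ is an affine bijection, vertices correspond. So every vertex of $Q_{M,s}(r)$ has coordinates in $\Z+\Z r$, which proves the first claim.

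\emph{Rational case.} If $r=A/B$, each coordinate has the form $m+m'A/B$ with $m,m'\in\Z$. Multiplying by $B$ gives $Bm+m'A\in\Z$. Vertices of $BQ$ are $B$ times the vertices of $Q$, so all vertices of $BQ_{M,s}(A/B)$ have integer coordinates.

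The main point needing care is the reduction of indices modulo $M$ with the lift $y_{i+M}=y_i+1$. One must check that the retained rows, after eliminating the constant coordinates $y_0$ and $y_M$, are genuinely of difference or single-variable type with integer offsets. This holds because each wraparound contributes only an integer shift, already recorded in $A_{ab}$.
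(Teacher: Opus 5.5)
Your proof is correct, and the alternative you mention at the end of the unimodularity step is exactly the paper's argument; the difference lies only in which argument you make primary.

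The paper does not invoke total unimodularity. After selecting $M-1$ tight rows with independent normals, it builds the auxiliary graph on $v_0,\ldots,v_{M-1}$. It then shows directly that an undirected cycle would give a vanishing signed sum of normals, so the $M-1$ edges form a forest and hence a spanning tree. Finally it reads off $y_i^\ast$ as a signed sum of the constants $A_{ab}+B_{ab}r$ along the tree path from $v_0$.

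Your main route replaces this with the classical fact that a directed incidence matrix, with the column for vertex $0$ deleted, is totally unimodular. Then $\det D=\pm1$, and Cramer's rule makes $D^{-1}$ integral. This is shorter and rests on a standard theorem, but it treats the inverse as a black box.

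The paper's self-contained tree construction is the one reused in Lemma~\ref{lem:polynomial}. There a fixed spanning tree is followed as $r$ varies, and the path length bounds the $r$-coefficient of every slack by $M$. That bound is what justifies the breakpoint set $\mathcal R_M$. Your unimodular route can recover this with one more Cramer-rule step: minors of $D$ with one row replaced by another difference row also lie in $\{-1,0,1\}$, so $(e_b-e_a)^{T}D^{-1}$ has entries in $\{-1,0,1\}$.
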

\begin{proof}
Fix a vertex $h^\ast$ of $Q_{M,s}(r)$, and let
\[
 y^\ast=\Gamma_M^{-1}(h^\ast)
       =(y_1^\ast,\ldots,y_{M-1}^\ast),
 \qquad y_0^\ast=0,\quad y_M^\ast=1.
\]
Since $\Gamma_M^{-1}$ is an affine bijection, $y^\ast$ is a vertex of $\Gamma_M^{-1}(Q_{M,s}(r))$. We work with the $M-1$ free coordinates $y_1,\ldots,y_{M-1}$.

A defining inequality in~\eqref{eq:difference-rows} is active at $y^\ast$ if equality holds there. Here the normal of a constraint is the vector of coefficients of $y_1,\ldots,y_{M-1}$ in its left-hand side; it is perpendicular to the boundary hyperplane where equality holds. The normals of the active constraints must span $\R^{M-1}$. Otherwise, there would be a nonzero vector $d\in\R^{M-1}$ orthogonal to every active normal. Moving from $y^\ast$ to $y^\ast\pm\varepsilon d$ preserves all active equalities. Since the remaining finitely many inequalities are strict at $y^\ast$, both perturbed points still satisfy them for sufficiently small $\varepsilon>0$. This would express $y^\ast$ as the midpoint of two distinct points of the polytope, contradicting that it is a vertex. We may therefore select $M-1$ active constraints with linearly independent normals.

Construct an auxiliary graph $G$ with nodes $v_0,\ldots,v_{M-1}$, one for each coordinate index. For each selected constraint with ordered index pair $(a,b)$, write its equality as
\[
 y_b^\ast-y_a^\ast=A_{ab}+B_{ab}r,
 \qquad A_{ab}\in\Z,\quad B_{ab}\in\{-1,0,1\},
\]
and draw an edge from $v_a$ to $v_b$. Thus the graph records the selected coordinate-difference equalities at the fixed polytope vertex $y^\ast$.

These $M-1$ edges contain no undirected cycle. Indeed, summing the corresponding linear forms $y_b-y_a$ around a cycle, with signs according to their orientations, gives the zero linear form. This would give a linear dependence among the selected normals. In particular, independence also excludes parallel edges, whose normals would be proportional. Hence $G$ is a forest. A forest with $M$ nodes and $c$ connected components has $M-c$ edges; since $G$ has $M-1$ edges, it has one component. Therefore $G$ is a spanning tree on $v_0,\ldots,v_{M-1}$.

For each $i=1,\ldots,M-1$, let $P_i$ be the unique path from $v_0$ to $v_i$. Along this path, add an edge equality when traversing its chosen orientation and subtract it otherwise. Since $y_0^\ast=0$, this yields
\[
 y_i^\ast
 =\sum_{(a,b)\in P_i}\varepsilon_{i,ab}(A_{ab}+B_{ab}r)
 \in\Z+\Z r,
 \qquad \varepsilon_{i,ab}\in\{-1,1\}.
\]
Returning to gap coordinates, we have
\[
 h_i^\ast=y_{i+1}^\ast-y_i^\ast\quad(0\leq i\leq M-2),
 \qquad h_{M-1}^\ast=1-y_{M-1}^\ast.
\]
Thus every coordinate of $h^\ast$ belongs to $\Z+\Z r$. If $r=A/B$ is rational, multiplying these coordinates by $B$ gives integers. Scaling by $B$ carries the vertices of $Q_{M,s}(A/B)$ bijectively to those of $BQ_{M,s}(A/B)$, proving the second assertion.
\end{proof}

\subsection{Intervals of polynomial dependence}
The elements of $\mathcal R_M$ in~\eqref{eq:support} are candidate values of the scale parameter $r$ at which the structure of $Q_{M,s}(r)$ can change.

\begin{lemma}\label{lem:polynomial}
The function $r\mapsto\nu_M(Q_{M,s}(r))$ is continuous on $(0,1/2)$ and polynomial of degree at most $M-1$, with rational coefficients, on every interval between consecutive elements of $\mathcal R_M$.
\end{lemma}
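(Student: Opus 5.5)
We work in the cumulative coordinates $y=\Gamma_M^{-1}(h)\in\R^{M-1}$. There $\nu_M(Q_{M,s}(r))=(M-1)!\,\Leb_{M-1}(P(r))$ by \eqref{eq:polytope-measure} and the unimodularity of $\Gamma_M$. Here $P(r)$ is the polytope cut out by the difference rows \eqref{eq:difference-rows}, namely $y_b-y_a\leq A_{ab}+B_{ab}r$ with $y_0=0$, $y_M=1$ and periodic extension. The plan is to show that the combinatorial type of this parametric polytope is constant on each open interval $J$ between consecutive elements of $\mathcal R_M$. On such an interval each vertex is then an affine function of $r$, and the volume, computed by a fixed triangulation, is a sum of determinants of $M-1$ affine-in-$r$ vectors. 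That gives a polynomial of degree at most $M-1$.

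\textbf{Step 1: locating possible type changes.} Following the proof of Lemma~\ref{lem:vertices}, every candidate vertex is determined by a spanning tree $T$ of selected constraint edges on $v_0,\dots,v_{M-1}$. It is given by $y_i^T(r)=\alpha_i+\beta_i r$ with $\alpha_i,\beta_i\in\Z$. Since $\beta_i$ is a signed sum of at most $M-1$ entries of $\{-1,0,1\}$, we have $|\beta_i|\leq M-1$. Such a candidate is feasible exactly when every other row $y_b-y_a\leq A_{ab}+B_{ab}r$ holds. Each such test is a linear inequality in $r$ of the form $\alpha+\beta r\leq 0$ with $\alpha,\beta\in\Z$ and $|\beta|\leq M$, since the contribution of the extra row adds at most $1$ in absolute value. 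The active/inactive status of a row at a candidate vertex can change only where $\alpha+\beta r=0$, i.e.\ at $r=-\alpha/\beta$ with $1\leq|\beta|\leq M$. Such points in $(0,1/2)$ lie in $\mathcal R_M$. Hence on $J$ the set of feasible tree-solutions and the set of tight constraints at each of them are constant. The main obstacle is making this bound on $|\beta|$ honest, and I would try it first by careful path bookkeeping. The delicate cases are rows through $v_0$ together with the boundary normalization $y_M=1$, and the worry that the coefficient could reach $M$ or more. The key point is that the tree has $M-1$ edges and the extra row contributes one more, giving a cycle of length at most $M$ whose $r$-coefficient is a sum of at most $M$ terms of absolute value at most $1$.

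\textbf{Step 2: constant face lattice.} Since the vertex set of $P(r)$ is the set of feasible tree-solutions, with coincident ones identified, and since the incidences between vertices and facets are constant on $J$, the face lattice is constant on $J$. Two tree-solutions coincide identically or only at an isolated $r$, which by the same argument lies in $\mathcal R_M$. I would then fix a triangulation of $P(r)$ using only its vertices, for instance a pulling triangulation relative to a fixed vertex order; this depends only on the face lattice. The volume is then $\sum_\Delta \frac{1}{(M-1)!}|\det(\text{edge vectors})|$. On $J$ the sign of each determinant is constant, because a simplex cannot degenerate inside $J$ without changing the type. This yields a polynomial in $r$ of degree at most $M-1$ with integer coefficients over $(M-1)!$, hence with rational coefficients. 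If $P(r)$ is lower-dimensional on $J$, the volume vanishes identically, which is consistent with Lemma~\ref{lem:gap-support}.

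\textbf{Step 3: continuity.} The region $P(r)$ is defined by finitely many inequalities whose right-hand sides depend continuously on $r$. On $(0,1/2)$ the indicator $\ind_{P(r)}(y)$ converges pointwise to $\ind_{P(r_0)}(y)$ as $r\to r_0$, except on the finitely many hyperplanes where some row is tight at $r_0$, which form a null set. Dominated convergence on the bounded region $\mathcal O_M$ then gives continuity of $r\mapsto\nu_M(Q_{M,s}(r))$ on $(0,1/2)$, including across points of $\mathcal R_M$.
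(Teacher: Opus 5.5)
Your proposal is correct and follows essentially the same route as the paper. Both use spanning-tree vertex candidates whose slacks have the form $\alpha+\beta r$ with $|\beta|\le M$ (tree path plus one row), which makes feasibility, active sets, coincidences and hence the face lattice constant on each interval. Both then use a fixed pulling triangulation with determinants of affine-in-$r$ vectors of constant sign, and prove continuity by bounded convergence off finitely many null hyperplanes. The only differences are cosmetic: you compute volume in cumulative rather than gap coordinates, and you run the continuity argument on the weak polytope rather than the strict region; both are harmless by unimodularity and \eqref{eq:polytope-measure}.
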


\begin{proof}
By the proof of Lemma~\ref{lem:vertices}, every vertex at a fixed scale is determined by $M-1$ active constraints whose associated edges form a spanning tree. To follow these points as $r$ varies, select $M-1$ inequalities from~\eqref{eq:difference-rows} whose edges form a spanning tree $\mathcal T$, and keep this selection fixed. With $y_{\mathcal T,0}(r)=0$, imposing the selected inequalities as equalities gives a unique solution $y_{\mathcal T}(r)$. Its coordinates have the form $y_{\mathcal T,i}(r)=\alpha_i+\beta_i r$ with $\alpha_i,\beta_i\in\Z$, by the same path-sum argument as in Lemma~\ref{lem:vertices}. Set $y_{\mathcal T,M}(r)=1$ and define the corresponding candidate in gap coordinates by
\[
 h_{\mathcal T}(r)=\bigl(y_{\mathcal T,i+1}(r)-y_{\mathcal T,i}(r)\bigr)_{i=0}^{M-1}.
\]
If $\mathcal T$ was selected from a vertex $h^\ast$ at scale $r_0$, then $h_{\mathcal T}(r_0)=h^\ast$. At other scales, the candidate need not satisfy all the defining inequalities.

We test this candidate against every retained inequality. For any ordered index pair $(a,b)$ retained in~\eqref{eq:difference-rows}, define the slack of its inequality at this candidate by
\begin{equation}\label{eq:constraint-difference}
 \begin{aligned}
 D_{\mathcal T,ab}(r)
 &=A_{ab}+B_{ab}r-\bigl(y_{\mathcal T,b}(r)-y_{\mathcal T,a}(r)\bigr)\\
 &=\alpha_{\mathcal T,ab}+\beta_{\mathcal T,ab}r,
 \qquad \alpha_{\mathcal T,ab},\beta_{\mathcal T,ab}\in\Z.
 \end{aligned}
\end{equation}
The selected tree inequalities have identically zero slack. Moreover,
\[
 h_{\mathcal T}(r)\in Q_{M,s}(r)
 \quad\Longleftrightarrow\quad
 D_{\mathcal T,ab}(r)\geq0\ \text{for every retained pair }(a,b).
\]
Whenever these conditions hold, $h_{\mathcal T}(r)$ is a vertex: the selected $M-1$ constraints remain active, have independent normals and uniquely determine the candidate. Its active constraints are precisely the retained pairs for which $D_{\mathcal T,ab}(r)=0$.

For any retained pair $(a,b)$, the path from $v_a$ to $v_b$ in $\mathcal T$ contains at most $M-1$ edges. Summing their equalities with the appropriate signs expresses $y_{\mathcal T,b}(r)-y_{\mathcal T,a}(r)$ as a path sum. Each edge has an $r$-coefficient in $\{-1,0,1\}$, and subtracting this sum from the bound for $(a,b)$ gives $|\beta_{\mathcal T,ab}|\leq M$. Thus, if $\beta_{\mathcal T,ab}\ne0$, the only zero of $D_{\mathcal T,ab}$ has reduced denominator at most $M$. A constant slack, including an identically zero one, produces no isolated transition.

Now repeat the construction for every spanning tree that can be selected from the retained inequalities. There are finitely many such selections, and at each scale every vertex occurs among their feasible candidates by Lemma~\ref{lem:vertices}.

Fix an open interval $\mathcal J$ between consecutive elements of $\mathcal R_M$. The denominator bound above shows that no nonconstant slack has a zero inside $\mathcal J$. Thus each slack is strictly positive, strictly negative or identically zero throughout $\mathcal J$. For each fixed tree, its candidate is therefore either feasible throughout $\mathcal J$ or infeasible throughout $\mathcal J$. For a feasible candidate, the active pairs are exactly those for which $D_{\mathcal T,ab}$ vanishes identically on $\mathcal J$. Consequently, both the list of feasible candidate functions and their active pairs remain fixed, while their coordinates may vary with $r$.

Different trees may produce the same vertex, so we must also remove duplicate candidates. Suppose $\mathcal T_1$ and $\mathcal T_2$ give feasible candidates that coincide at some $r_0\in\mathcal J$. Since the cumulative coordinates are determined by the gaps,
\[
 h_{\mathcal T_1}(r_0)=h_{\mathcal T_2}(r_0)
 \quad\Longrightarrow\quad
 y_{\mathcal T_1}(r_0)=y_{\mathcal T_2}(r_0).
\]
For any pair $(a,b)$ selected for $\mathcal T_2$, this gives
\[
 \begin{aligned}
 D_{\mathcal T_1,ab}(r_0)
 &=A_{ab}+B_{ab}r_0-\bigl(y_{\mathcal T_1,b}(r_0)-y_{\mathcal T_1,a}(r_0)\bigr)\\
 &=A_{ab}+B_{ab}r_0-\bigl(y_{\mathcal T_2,b}(r_0)-y_{\mathcal T_2,a}(r_0)\bigr)
 =D_{\mathcal T_2,ab}(r_0)=0.
 \end{aligned}
\]
The last equality holds because this pair belongs to the selected equalities for $\mathcal T_2$.

By the constant-sign property on $\mathcal J$, a slack that vanishes at $r_0$ must vanish identically there. Hence, for every pair selected for $\mathcal T_2$,
\[
 D_{\mathcal T_1,ab}(r)=0,
 \qquad
 y_{\mathcal T_1,b}(r)-y_{\mathcal T_1,a}(r)=A_{ab}+B_{ab}r
 \quad(r\in\mathcal J).
\]
Thus $y_{\mathcal T_1}(r)$ solves all the selected equalities for $\mathcal T_2$, with the same normalization $y_{\mathcal T_1,0}(r)=0$. That system has a unique solution, so
\[
 y_{\mathcal T_1}(r)=y_{\mathcal T_2}(r)
 \quad\Longrightarrow\quad
 h_{\mathcal T_1}(r)=h_{\mathcal T_2}(r)
 \qquad(r\in\mathcal J).
\]
Two feasible candidate functions therefore either agree throughout $\mathcal J$ or never coincide inside it. Counting identical functions once gives a fixed list of distinct vertex functions on $\mathcal J$.

For a collection $\mathcal C$ of retained ordered pairs, write $y=\Gamma_M^{-1}(h)$ and set
\[
 F_{\mathcal C}(r)
 =\bigl\{h\in Q_{M,s}(r):
 y_b-y_a=A_{ab}+B_{ab}r\ \text{for every }(a,b)\in\mathcal C\bigr\}.
\]
Whenever this set is nonempty, it is a face of $Q_{M,s}(r)$, and every nonempty face arises in this way. A vertex $h_{\mathcal T}(r)$ belongs to $F_{\mathcal C}(r)$ if and only if all the constraints indexed by $\mathcal C$ are active there. This membership is constant on $\mathcal J$, since each corresponding slack either vanishes identically or never vanishes on that interval. As every nonempty face contains a vertex, whether $F_{\mathcal C}(r)$ is empty is also independent of $r\in\mathcal J$.

Every nonempty face is the convex hull of its vertices. Hence the subsets of vertex functions that determine faces, and thus the vertex--face incidence, are fixed throughout $\mathcal J$. This fixes which vertex functions give the endpoints of each edge and which belong to each higher-dimensional face, while their coordinates continue to vary with $r$. The dimension of a nonempty face is one less than the number of faces in a maximal chain from a vertex to that face, so these fixed inclusions also keep all face dimensions constant throughout $\mathcal J$.

Order this finite list of vertex functions once and use its pulling triangulation: successively cone from the first vertex over the recursively triangulated faces not containing it. Because the face incidences remain fixed, the same list of vertex tuples triangulates the polytope for all $r\in\mathcal J$. In a full-dimensional simplex of that list, write its independent gap coordinates as $v_0(r),\ldots,v_{M-1}(r)\in\R^{M-1}$. Its coordinate measure is
\begin{equation}\label{eq:simplex-determinant}
 \frac{1}{(M-1)!}
 \left|\det\bigl(v_1(r)-v_0(r),\ldots,v_{M-1}(r)-v_0(r)\bigr)\right|.
\end{equation}
Each entry is integer-affine in $r$. The determinant has degree at most $M-1$ and cannot vanish inside $\mathcal J$, since these tuples remain simplices of the pulling triangulation. Its sign is consequently constant. Summing~\eqref{eq:simplex-determinant}, whose overlaps have zero coordinate measure, and multiplying by $(M-1)!$ as in~\eqref{eq:polytope-measure} proves polynomiality with rational coefficients. If the polytope is empty or lower-dimensional throughout $\mathcal J$, its ambient coordinate measure is identically zero.

For continuity, fix $r_0\in(0,1/2)$ and let $(r_k)$ be any sequence in $(0,1/2)$ converging to $r_0$. Suppose $h\in\Delta_{M-1}$ satisfies
\begin{equation}\label{eq:continuity-nonboundary}
 \sum_{v=0}^{s-1}h_{i+v}\ne r_0
 \quad\text{and}\quad
 \sum_{v=0}^{s}h_{i+v}\ne r_0
 \qquad(i=0,\ldots,M-1).
\end{equation}
For this fixed $h$, the finitely many gap sums have positive distances from $r_0$. Thus every inequality defining $\mathcal S_{M,s}(r)$ in~\eqref{eq:successor-region} has the same truth value at $r=r_k$ and at $r=r_0$ for all sufficiently large $k$. The corresponding indicators are therefore eventually equal.

The set of $h\in\Delta_{M-1}$ for which equality holds in at least one comparison in~\eqref{eq:continuity-nonboundary} is contained in a finite union of proper affine hyperplanes. Indeed, each displayed sum uses a nonempty proper subset of the gaps, so it is nonconstant on the affine hull of $\Delta_{M-1}$. These hyperplanes have $\eta_M$-measure zero and hence $\nu_M$-measure zero by~\eqref{eq:gap-density}. Consequently,
\[
 \mathbf1_{\mathcal S_{M,s}(r_k)}(h)
 \longrightarrow
 \mathbf1_{\mathcal S_{M,s}(r_0)}(h)
 \qquad\text{for }\nu_M\text{-almost every }h.
\]
Since $\nu_M$ is a probability measure and all these indicators are bounded by $1$, bounded convergence and~\eqref{eq:polytope-measure} give
\[
 \begin{aligned}
 \lim_{k\to\infty}\nu_M(Q_{M,s}(r_k))
 &=\lim_{k\to\infty}\int_{\Delta_{M-1}}
   \mathbf1_{\mathcal S_{M,s}(r_k)}(h)\,d\nu_M(h)\\
 &=\int_{\Delta_{M-1}}
   \mathbf1_{\mathcal S_{M,s}(r_0)}(h)\,d\nu_M(h)\\
 &=\nu_M(Q_{M,s}(r_0)).
 \end{aligned}
\]
As the sequence was arbitrary, this proves continuity at every $r_0\in(0,1/2)$, including the algebraic support endpoints lying in that interval.
\end{proof}
The triangulation proves polynomiality; it need not be constructed by the arithmetic evaluator below. The use of difference constraints additionally identifies the explicit denominator bound, beyond general spacing-integration methods~\cite{HL01}.

\subsection{Rational scales and lattice-point counts}
Take $r=A/B\in I_{M,s}$ in lowest terms. The map
\begin{equation}\label{eq:dilation-projection}
 Q_{M,s}(A/B)\longrightarrow\R^{M-1},\qquad
 h\longmapsto(Bh_0,\ldots,Bh_{M-2})
\end{equation}
has image an integral polytope $\widehat Q$. To see the lattice exactly, on the affine hyperplane $\sum_{i=0}^{M-1}d_i=B$, deleting the last coordinate has inverse
\[
 (d_0,\ldots,d_{M-2})\longmapsto
 \left(d_0,\ldots,d_{M-2},B-\sum_{i=0}^{M-2}d_i\right).
\]
It identifies the integer points with $\Z^{M-1}$. The same statement holds with $B$ replaced by $kB$ for any integer $k\geq0$.

\begin{lemma}\label{lem:finite-difference}
For $A/B\in I_{M,s}$, the gap measure $\nu_M(Q_{M,s}(A/B))$ is given by~\eqref{eq:rational-coefficient}.
\end{lemma}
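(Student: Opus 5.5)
The plan is to read \eqref{eq:rational-coefficient} as an $(M-1)$-st finite difference of an Ehrhart polynomial. Put $r=A/B\in I_{M,s}$ in lowest terms and let $\widehat Q\subset\R^{M-1}$ be the image of $Q_{M,s}(A/B)$ under the dilation-projection \eqref{eq:dilation-projection}. By Lemma~\ref{lem:vertices}, every vertex of $BQ_{M,s}(A/B)$ has integer gap coordinates. Deleting the last coordinate is a unimodular identification of the integer points of $\{\sum d_i=B\}$ with $\Z^{M-1}$, so $\widehat Q$ is an integral polytope in $\R^{M-1}$. Ehrhart's theorem then gives a polynomial $L(k)=\#(k\widehat Q\cap\Z^{M-1})$ of degree at most $M-1$, valid for every integer $k\geq0$. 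Its coefficient of $k^{M-1}$ is the $(M-1)$-dimensional Lebesgue volume of $\widehat Q$, which is zero if $\widehat Q$ is not full-dimensional.

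\textbf{Step 1: identify $L(k)$ with the lattice count.} A point of $k\widehat Q\cap\Z^{M-1}$ lifts, via $d_{M-1}=kB-\sum_{i\le M-2}d_i$, to $d=kBh$ for some $h\in Q_{M,s}(A/B)$. Multiplying the defining inequalities of \eqref{eq:polytope} by $kB$ turns them into
\[
\sum_{v=0}^{s-1}d_{i+v}\le kA\le\sum_{v=0}^{s}d_{i+v},
\qquad \sum_{i=0}^{M-1} d_i=kB,\qquad d\ge0,
\]
which are exactly the conditions in \eqref{eq:count} with $K=kB$ and $H=kA$. Hence $L(k)=C_{M,s}(kB,kA)$ for $k\ge1$.

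\textbf{Step 2: check $k=0$.} At $k=0$ the constraints force $d=0$, so $C_{M,s}(0,0)=1$. This agrees with $L(0)=1$, which holds because the polytope is nonempty. Nonemptiness follows from Lemma~\ref{lem:gap-support} together with \eqref{eq:polytope-measure}, since $r\in I_{M,s}$ gives positive measure.

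\textbf{Step 3: take the finite difference.} For any polynomial $L$ of degree at most $M-1$,
\[
\sum_{k=0}^{M-1}(-1)^{M-1-k}\binom{M-1}{k}L(k)=(M-1)!\,[k^{M-1}]L=(M-1)!\,\mathrm{vol}(\widehat Q).
\]

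\textbf{Step 4: convert the volume to $\nu_M$.} The map \eqref{eq:dilation-projection} is a scaling by $B$ of the independent gap coordinates of \eqref{eq:coordinate-measure}. Therefore $\mathrm{vol}(\widehat Q)=B^{M-1}\eta_M(Q_{M,s}(A/B))$. Combining this with \eqref{eq:polytope-measure}, $\nu_M=(M-1)!\,\eta_M$, gives \eqref{eq:rational-coefficient}.

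The main obstacle is the correctness of Step~1 at the lattice level. One must confirm that the closed inequalities of $Q_{M,s}$, not the strict ones of $\mathcal S_{M,s}$, are the ones being counted; \eqref{eq:count} uses non-strict inequalities, so this matches. One must also confirm that the lattice in gap coordinates is exactly $\Z^{M-1}$, so that Ehrhart's theorem applies to an integral rather than a merely rational polytope. This is where Lemma~\ref{lem:vertices} and the unimodularity of $\Gamma_M$ and of the coordinate deletion are used. If integrality failed, $L$ would only be a quasi-polynomial and the single finite difference would not isolate the volume.
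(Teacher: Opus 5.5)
Your proposal is correct and follows essentially the same route as the paper: integrality of $\widehat Q$ from Lemma~\ref{lem:vertices}, the Ehrhart polynomial identified with $C_{M,s}(kB,kA)$ for all $k\geq0$ (including $k=0$), the $(M-1)$st forward difference isolating $(M-1)!$ times the leading coefficient $B^{M-1}\eta_M(Q_{M,s}(A/B))$, and the conversion to $\nu_M$ via~\eqref{eq:polytope-measure}. The only cosmetic difference is that you allow degree at most $M-1$ to cover a possible lower-dimensional polytope. The paper instead notes full-dimensionality directly, because the uniform gap vector satisfies every inequality strictly.
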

\begin{proof}
The uniform gap vector satisfies all defining inequalities strictly because $A/B\in I_{M,s}$, as in the proof of Lemma~\ref{lem:gap-support}. Thus $\widehat Q$ has nonempty interior, and Lemma~\ref{lem:vertices} gives its integrality. Ehrhart's theorem~\cite[Theorem~3.8 and Corollary~3.20]{BR} gives a polynomial
\[
 E(k)=\#(k\widehat Q\cap\Z^{M-1}),\qquad k\in\Z_{\geq0},
\]
of degree $M-1$, with $E(0)=1$ and the coefficient of $k^{M-1}$ given by
\[
 [k^{M-1}]E(k)=B^{M-1}\eta_M(Q_{M,s}(A/B)).
\]
Recall from~\eqref{eq:count} that, for every integer $k\geq0$,
\[
 C_{M,s}(kB,kA)
 =\#\left\{d\in\Z_{\geq0}^{M}:
 \sum_{i=0}^{M-1}d_i=kB,\quad
 \sum_{v=0}^{s-1}d_{i+v}\leq kA\leq\sum_{v=0}^{s}d_{i+v}
 \quad(i=0,\ldots,M-1)\right\}.
\]
The indices of $d$ are interpreted modulo $M$. For $k\geq1$, substituting $d_i=kBh_i$ converts the defining inequalities of $Q_{M,s}(A/B)$ into precisely these conditions. Deleting the last coordinate identifies these integer vectors bijectively with $k\widehat Q\cap\Z^{M-1}$: the missing coordinate is the integer $d_{M-1}=kB-\sum_{i=0}^{M-2}d_i$. At $k=0$, both counts are $1$. Hence $E(k)=C_{M,s}(kB,kA)$ for every $k\geq0$.

Define $(\Delta f)(k)=f(k+1)-f(k)$ and its iterates by $\Delta^0f=f$ and $\Delta^{v+1}f=\Delta(\Delta^vf)$. Repeated expansion of this definition gives
\[
 (\Delta^{M-1}E)(0)
 =\sum_{k=0}^{M-1}(-1)^{M-1-k}\binom{M-1}{k}E(k).
\]
To compute this difference from the coefficients of $E$, the binomial theorem gives
\[
 \Delta(k^j)=(k+1)^j-k^j
 =\sum_{u=0}^{j-1}\binom ju k^u
 \qquad(j\geq1).
\]
Applying $\Delta$ to a nonconstant polynomial lowers its degree by one and multiplies its leading coefficient by its degree. For $j\geq1$, the successive leading-coefficient factors are $j,j-1,\ldots,1$, and after $j$ differences only a constant remains. Thus
\[
 \Delta^j(k^j)=j(j-1)\cdots1=j!,
 \qquad
 \Delta^{j+1}(k^j)=\Delta(j!)=0
 \quad(j\geq1).
\]
Together with $\Delta(1)=0$, this gives
\[
 \Delta^{M-1}(k^j)
 =\begin{cases}
  0,&\text{if }0\leq j<M-1,\\
  (M-1)!,&\text{if }j=M-1.
 \end{cases}
\]
By linearity,
\[
 (\Delta^{M-1}E)(0)
 =\sum_{j=0}^{M-1}\bigl([k^j]E(k)\bigr)
   \left.\Delta^{M-1}(k^j)\right|_{k=0}
 =(M-1)!\,[k^{M-1}]E(k).
\]
Using~\eqref{eq:polytope-measure} and the leading-coefficient identity above, we obtain the expression in~\eqref{eq:rational-coefficient}:
\[
 \begin{aligned}
 \nu_M(\mathcal S_{M,s}(A/B))
 &=\nu_M(Q_{M,s}(A/B))
  =(M-1)!\,\eta_M(Q_{M,s}(A/B))\\
 &=\frac{(M-1)!}{B^{M-1}}[k^{M-1}]E(k)
  =\frac{1}{B^{M-1}}(\Delta^{M-1}E)(0)\\
 &=\frac{1}{B^{M-1}}\sum_{k=0}^{M-1}
   (-1)^{M-1-k}\binom{M-1}{k}E(k)\\
 &=\frac{1}{B^{M-1}}\sum_{k=0}^{M-1}
   (-1)^{M-1-k}\binom{M-1}{k}C_{M,s}(kB,kA).
 \end{aligned}
\]
\end{proof}

At $r=s/M$ or $r=(s+1)/M$ within the scale range, the weak polytope is nonempty but lower-dimensional. Indeed, the uniform gap vector belongs to it, and summing the inequalities forces a proper gap sum to be constant. Its $(M-1)$st Ehrhart difference is zero. Outside the closed algebraic support the polytope is empty; the formal count still has $C_{M,s}(0,0)=1$, so the same finite-difference expression must not be used there. The prescription that the arithmetic coefficient is zero outside $I_{M,s}$ prevents this spurious contribution.

\subsection{Interpolation and completion of the finite law}\label{sec:finite-completion}
\begin{proposition}\label{prop:gap-evaluation}
For $M\geq3$, $1\leq s<M/2$ and $0<r<1/2$, the coefficient $P_{M,s}(r)$ defined in the Introduction satisfies
\begin{equation}\label{eq:coefficient-gap-identity}
 P_{M,s}(r)=\nu_M(Q_{M,s}(r))
          =\nu_M(\mathcal S_{M,s}(r)).
\end{equation}
\end{proposition}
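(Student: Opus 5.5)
The second equality in \eqref{eq:coefficient-gap-identity} is already \eqref{eq:polytope-measure}, so it remains to show $P_{M,s}(r)=\nu_M(Q_{M,s}(r))$ for every $0<r<1/2$. We split into three cases according to the definition in the Introduction: $r\notin I_{M,s}$, $r\in I_{M,s}$ rational, and $r\in I_{M,s}$ irrational.

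\emph{Case $r\notin I_{M,s}$.} Here $P_{M,s}(r)=0$ by definition. Lemma~\ref{lem:gap-support} gives $\nu_M(\mathcal S_{M,s}(r))=0$, and hence $\nu_M(Q_{M,s}(r))=0$ by \eqref{eq:polytope-measure}.

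\emph{Case $r=A/B\in I_{M,s}$ rational.} This is exactly Lemma~\ref{lem:finite-difference}, since \eqref{eq:rational-coefficient} is the defining formula for $P_{M,s}(A/B)$.

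\emph{Case $r\in I_{M,s}$ irrational.} Let $\alpha<\beta$ be the consecutive elements of $\mathcal R_M$ with $\alpha<r<\beta$, and put $\mathcal J=(\alpha,\beta)$. Since $s/M$ and $(s+1)/M$ belong to $\mathcal R_M$ (when the latter is at most $1/2$), as does $1/2$, we have $\mathcal J\subseteq I_{M,s}$. By Lemma~\ref{lem:polynomial}, $g(x)=\nu_M(Q_{M,s}(x))$ agrees on $\mathcal J$ with a polynomial $p$ of degree at most $M-1$.

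We next check that each $\rho_k=(kA_-+A_+)/(kB_-+B_+)$, $1\le k\le M$, lies in $\mathcal J$, and that the $\rho_k$ are distinct. Because $\alpha<\beta$ in lowest terms, $A_+B_--A_-B_+>0$; the mediant-type formula then shows $\rho_k$ is strictly between $\alpha$ and $\beta$. The computation
\[
\rho_{k+1}-\rho_k=\frac{A_-B_+-A_+B_-}{\bigl((k+1)B_-+B_+\bigr)\bigl(kB_-+B_+\bigr)}<0
\]
shows the $\rho_k$ are strictly decreasing, hence distinct. Each $\rho_k$ is rational and lies in $\mathcal J\subseteq I_{M,s}$, so the rational case gives $P_{M,s}(\rho_k)=g(\rho_k)=p(\rho_k)$. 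The Lagrange interpolant through $M$ distinct nodes of a polynomial of degree at most $M-1$ reproduces that polynomial. Therefore \eqref{eq:gap-interpolation} evaluates to $p(r)=g(r)$.

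Only two points need care, and both are mild. First, the reduced forms of the $\rho_k$ may have different denominators than $kB_-+B_+$. This is harmless, because the rational case applies to any reduced representation. Second, one must confirm $\mathcal J\subseteq I_{M,s}$, i.e.\ that the support endpoints are in $\mathcal R_M$; this is clear because they have denominator $M$ (or equal $1/2=1/2$ with $2\le M$).

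The main obstacle is the irrational case, and specifically the reliance on Lemma~\ref{lem:polynomial} for polynomiality on the whole open gap $\mathcal J$. Once that is granted, the argument is just uniqueness of polynomial interpolation.
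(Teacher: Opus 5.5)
Your proposal is correct and takes essentially the same route as the paper. The rational case is Lemma~\ref{lem:finite-difference}, the case outside $I_{M,s}$ follows from Lemma~\ref{lem:gap-support} with \eqref{eq:polytope-measure}, and the irrational case is Lagrange interpolation of the polynomial piece from Lemma~\ref{lem:polynomial} at the $M$ distinct rational nodes $\rho_k\in(\alpha,\beta)\subseteq I_{M,s}$. Your case split also makes the paper's separate continuity remark at partition points unnecessary, since every rational point of $I_{M,s}$ is already covered by the lattice-count lemma.
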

\begin{proof}
For consecutive $\alpha=A_-/B_-<\beta=A_+/B_+$ in $\mathcal R_M$, the numbers $\rho_1,\ldots,\rho_M$ in~\eqref{eq:gap-interpolation} satisfy
\[
 \rho_k-\alpha=
 \frac{A_+B_--A_-B_+}{B_-(kB_-+B_+)}>0,
 \qquad
 \beta-\rho_k=
 \frac{k(A_+B_--A_-B_+)}{B_+(kB_-+B_+)}>0.
\]
They are strictly decreasing, hence distinct. Every endpoint of $I_{M,s}$ lying in $(0,1/2)$ belongs to $\mathcal R_M$, so if $(\alpha,\beta)$ meets $I_{M,s}$, it is contained in $I_{M,s}$.

Suppose $(\alpha,\beta)\subset I_{M,s}$. At each of the $M$ rational points $\rho_k$, Lemma~\ref{lem:finite-difference} gives
\[
 P_{M,s}(\rho_k)=\nu_M(Q_{M,s}(\rho_k))
 \qquad(k=1,\ldots,M).
\]
By Lemma~\ref{lem:polynomial}, the gap measure on $(\alpha,\beta)$ is a polynomial in $r$ of degree at most $M-1$. Its Lagrange interpolation formula at these points is
\[
 \nu_M(Q_{M,s}(r))
 =\sum_{k=1}^{M}P_{M,s}(\rho_k)
   \prod_{\substack{1\leq v\leq M\\v\ne k}}
     \frac{r-\rho_v}{\rho_k-\rho_v},
 \qquad r\in(\alpha,\beta).
\]
Indeed, each product on the right has $M-1$ linear factors in $r$, so the right-hand side is a polynomial of degree at most $M-1$. At $r=\rho_j$, the product equals $1$ when $k=j$ and $0$ otherwise. Thus this polynomial takes the value $P_{M,s}(\rho_j)=\nu_M(Q_{M,s}(\rho_j))$ at each of the $M$ distinct points. Two polynomials of degree at most $M-1$ agreeing at $M$ distinct points are identical, which proves the displayed formula throughout $(\alpha,\beta)$. All nodes and sampled values are rational, so the interpolation polynomial has rational coefficients.

For irrational $r$ in this interval, the right-hand side is exactly the definition of $P_{M,s}(r)$ in~\eqref{eq:gap-interpolation}. For rational $r$ in the interval, Lemma~\ref{lem:finite-difference} identifies the value in~\eqref{eq:rational-coefficient} with the same gap measure. Hence both definitions agree with this single polynomial throughout $(\alpha,\beta)$. At an interval endpoint lying in $I_{M,s}$, the rational formula again gives the gap measure; continuity from Lemma~\ref{lem:polynomial} shows that the adjacent polynomial pieces approach that value. Outside $I_{M,s}$, both $P_{M,s}$ and the gap measure vanish by the definition in the Introduction and Lemma~\ref{lem:gap-support}.
\end{proof}

\begin{proof}[Proof of Theorem~\ref{thm:finite}]
Fix $q=w/\ell\in\Qn$ and, for $1\leq j\leq \lfloor n/\ell\rfloor$, put $M=j\ell$ and $s=jw$. If $s\geq1$ and $r\in I_{M,s}$, Propositions~\ref{prop:moments} and~\ref{prop:gap-evaluation} give
\[
 \begin{aligned}
 B_{q,j}(n,r)
 &=\E_n\binom{N_q(n,r)}j\\
 &=\binom nM(1-Mr+s)^{n-M}\nu_M(\mathcal S_{M,s}(r))\\
 &=\binom nM(1-Mr+s)^{n-M}P_{M,s}(r)
  =W_{M,s}(n,r).
 \end{aligned}
\]
If $s\geq1$ and $r\notin I_{M,s}$, both the moment and the weight are zero. For $q=0$, we have $\ell=1$, $M=j$ and $s=0$, and the explicit part of Proposition~\ref{prop:moments} gives
\[
 B_{0,j}(n,r)=\binom nj(1-jr)_+^{n-1}=W_{j,0}(n,r).
\]
Thus $B_{q,j}(n,r)=W_{j\ell,jw}(n,r)$ for every admissible $q$ and $j\geq1$.

Substituting these moments into Lemma~\ref{lem:inversion} gives the two probability identities
\[
 \begin{aligned}
 \Pp_n(N_q(n,r)\geq1)
 &=\sum_{j=1}^{\lfloor n/\ell\rfloor}(-1)^{j-1}W_{j\ell,jw}(n,r),\\
 \Pp_n(N_q(n,r)=k)
 &=\sum_{j=k}^{\lfloor n/\ell\rfloor}(-1)^{j-k}\binom jk W_{j\ell,jw}(n,r),
 \qquad k\geq1,
 \end{aligned}
\]
where an empty sum is zero. By Proposition~\ref{prop:classifier}, the following event identifications hold up to $\Pp_n$-null sets. The odd-sphere event is the disjoint union of $\{N_q(n,r)\geq1\}$ over $c_t<q<c_{t+1}$. Hence
\[
 \begin{aligned}
 p_{S^{2t+1}}(n,r)
 &=\sum_{\substack{q=w/\ell\in\Qn\\c_t<q<c_{t+1}}}
       \Pp_n(N_q(n,r)\geq1)\\
 &=\sum_{\substack{q=w/\ell\in\Qn\\c_t<q<c_{t+1}}}
       \sum_{j=1}^{\lfloor n/\ell\rfloor}
       (-1)^{j-1}W_{j\ell,jw}(n,r),
 \end{aligned}
\]
which proves~\eqref{eq:odd-law}.

For $m\geq1$, Proposition~\ref{prop:classifier} shows that a wedge of $m$ copies of $S^{2t}$ requires $m+1$ periodic orbits, each containing $2t+1$ points. If $(m+1)(2t+1)>n$, this is impossible, and both sides of~\eqref{eq:even-law} are zero. Otherwise, $c_t=t/(2t+1)\in\Qn$, and the event of this homotopy type is $\{N_{c_t}(n,r)=m+1\}$. Taking $\ell=2t+1$, $w=t$ and $k=m+1$ in the second inversion identity gives
\[
 \begin{aligned}
 p_{\bigvee^m S^{2t}}(n,r)
 &=\Pp_n(N_{c_t}(n,r)=m+1)\\
 &=\sum_{j=m+1}^{\lfloor n/(2t+1)\rfloor}
       (-1)^{j-m-1}\binom{j}{m+1}
       W_{j(2t+1),jt}(n,r),
 \end{aligned}
\]
proving~\eqref{eq:even-law}.

Although $\bigvee^0 S^{2t}\simeq\pt$ for every $t\geq0$, setting $k=1$ at the fixed fraction $c_t$ counts only $\{N_{c_t}(n,r)=1\}$. The contractible event is the disjoint union of these events over all admissible critical fractions. Thus
\[
 \begin{aligned}
 p_{\pt}(n,r)
 &=\sum_{\tau=0}^{\lfloor(n-1)/2\rfloor}
       \Pp_n(N_{c_\tau}(n,r)=1)\\
 &=\sum_{\tau=0}^{\lfloor(n-1)/2\rfloor}
       \sum_{j=1}^{\lfloor n/(2\tau+1)\rfloor}
       (-1)^{j-1}j\,W_{j(2\tau+1),j\tau}(n,r).
 \end{aligned}
\]
Here the factor $j$ is $\binom j1$ from the second inversion identity. This proves~\eqref{eq:point-law} and explains the additional sum over critical fractions. Proposition~\ref{prop:classifier} excludes all other homotopy types.

It remains to prove continuity and the stated polynomial properties on each interval. For $s\geq1$, consider an interval between consecutive elements of $\mathcal R_M$ contained in $I_{M,s}$. Lemma~\ref{lem:polynomial} and Proposition~\ref{prop:gap-evaluation} show that $P_{M,s}$ is a polynomial there with rational coefficients and degree at most $M-1$. Therefore
\[
 \deg_r W_{M,s}(n,r)
 \leq(n-M)+\deg_r P_{M,s}(r)
 \leq(n-M)+(M-1)=n-1.
\]
The same product formula and continuity of the gap measure show that the weight is continuous throughout $I_{M,s}$. Outside $I_{M,s}$ the weight is zero. To check continuity at an endpoint $r_0\in\partial I_{M,s}\cap(0,1/2)$, note that $0<1-Mr+s<1$ on $I_{M,s}$ and $n-M\geq0$. Consequently,
\[
 0\leq W_{M,s}(n,r)
 \leq\binom nM P_{M,s}(r)
 =\binom nM\nu_M(Q_{M,s}(r))
 \qquad(r\in I_{M,s}).
\]
The gap measure is continuous by Lemma~\ref{lem:polynomial}. At $r_0$, Lemma~\ref{lem:gap-support} and Proposition~\ref{prop:gap-evaluation} give
\[
 \nu_M(Q_{M,s}(r_0))=\nu_M(\mathcal S_{M,s}(r_0))=0.
\]
The bound therefore gives
\[
 \lim_{\substack{r\to r_0\\r\in I_{M,s}}}W_{M,s}(n,r)
 =0=W_{M,s}(n,r_0).
\]
Together with the zero value outside $I_{M,s}$, this proves continuity at every endpoint lying in $(0,1/2)$, including when $n=M$.

For $s=0$, the explicit weight is
\[
 W_{M,0}(n,r)
 =\begin{cases}
  \displaystyle\binom nM(1-Mr)^{n-1},&\text{if }r<1/M,\\[3pt]
  0,&\text{if }r\geq1/M.
 \end{cases}
\]
Since $n\geq2$, the two pieces meet at zero and have degree at most $n-1$. Any breakpoint $1/M$ in $(0,1/2)$ belongs to $\mathcal R_M$. For $M=1$, the weight $W_{1,0}(n,r)=n(1-r)^{n-1}$ is a polynomial on the whole scale interval.

Since $\mathcal R_M\subseteq\mathcal R_n$ for $2\leq M\leq n$, the finite sums defining $p_T$ inherit continuity and the polynomial properties of the weights on the common partition $\mathcal R_n$.
\end{proof}

\begin{remark}\label{rem:low-scale}
For $n\geq2$ and $0<r\leq1/3$, the following known circle law is a special case of Theorem~\ref{thm:finite}:
\[
 p_{S^1}(n,r)=1-\sum_{j=1}^n(-1)^{j-1}\binom nj(1-jr)_+^{n-1}.
\]
Indeed, in this range Proposition~\ref{prop:classifier} identifies the circle event with $\{N_0(n,r)=0\}$ almost surely. Binomial inversion with the zero-winding weights $W_{j,0}$ in~\eqref{eq:weights} gives the displayed formula. The count $N_0(n,r)$ is the number of circular gaps of length at least $r$, whose classical distribution is due to Stevens~\cite[Sections~3--5]{Stevens}. For $0<r<1/3$, the exact circle probability also appears in~\cite[Section~5]{PVK}; the endpoint $r=1/3$ follows by continuity.
\end{remark}

\section{The critical Poisson and homotopy laws}\label{sec:critical}
Fix $t\in\Z_{\geq1}$ and put $q=c_t=t/(2t+1)$, $w=t$ and $\ell=2t+1$. In this section $c_t$ is the prescribed target fraction; the actual sample winding fraction need not equal it. We first specialize the lattice-count evaluation to one critical orbit, then estimate the joint persistence of several such orbits to obtain their moment asymptotics.

\subsection{The critical one-orbit probability}\label{sec:critical-one-orbit}
Selecting one prospective critical orbit means taking $j=1$, hence $M=\ell$ and $s=t$. The probability $P_{\ell,t}$ concerns the map on those $\ell$ points alone. Since $\gcd(t,\ell)=1$, the shift $i\mapsto i+t$ consists of one orbit. The near-regular configurations and threshold analysis have antecedents in~\cite[Section~6]{AA}.

\begin{lemma}\label{lem:critical-probability}
For $0<r<1/2$,
\begin{equation}\label{eq:critical-probability}
 P_{\ell,t}(r)=(\ell r-t)_+^{\ell-1}.
\end{equation}
\end{lemma}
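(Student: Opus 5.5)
We compute $P_{\ell,t}(r)=\nu_\ell(\mathcal S_{\ell,t}(r))$ directly, using Proposition~\ref{prop:gap-evaluation}, by changing coordinates from the circular gaps to the steps along the single prospective orbit. First we dispose of the support. Here $M=\ell=2t+1$ and $s=t$, so
\[
 I_{\ell,t}=(t/\ell,(t+1)/\ell)\cap(0,1/2)=(c_t,1/2),
\]
because $(t+1)/\ell>1/2$. For $r\le c_t$, Lemma~\ref{lem:gap-support} gives $P_{\ell,t}(r)=0$, which agrees with $(\ell r-t)_+^{\ell-1}=0$. So it remains to treat $c_t<r<1/2$.

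\textbf{Step 1: drop the upper inequalities.} Since $M=2t+1$, the complement of the $t$-block $h_i+\cdots+h_{i+t-1}$ is the $(t+1)$-block starting at index $i+t$. Hence the condition $\sum_{v=0}^{t}h_{i+t+v}\ge r$ is equivalent to the $t$-block starting at $i$ being at most $1-r$. Because $r<1/2<1-r$, this is implied by the lower condition that the same $t$-block is $<r$. Therefore
\[
 \mathcal S_{\ell,t}(r)=\Bigl\{h\in\Delta_{\ell-1}:\ z_{i+t}-z_i<r\ \text{for all } i\Bigr\},
\]
where the $z_i$ are the periodic lifts.

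\textbf{Step 2: orbit coordinates.} Follow the orbit order $k\mapsto kt$ and set
\[
 e_k=z_{(k+1)t}-z_{kt},\qquad k\in\Z/\ell .
\]
These steps are positive and satisfy $\sum_k e_k=t$ (the winding number). The constraints become $e_k<r$.

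Conversely, we recover the gaps from the steps. Since $2t\equiv-1\pmod\ell$, we have $z_{i+2t}=z_{i-1}+1$. Consequently each gap is one minus the sum of two consecutive steps, in orbit order. So the condition $h\ge0$ is automatic when all $e_k<r<1/2$, and positivity of the $e_k$ is automatic as well.

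Now substitute $f_k=r-e_k$. The region becomes the open simplex
\[
 \Bigl\{f_k>0,\ \sum_k f_k=\ell r-t\Bigr\},
\]
whose measure in $\ell-1$ free coordinates is $(\ell r-t)^{\ell-1}/(\ell-1)!$.

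\textbf{Step 3: Jacobian, the main point to check.} We must show that the affine map from the free step coordinates $(e_0,\ldots,e_{\ell-2})$ to the cumulative coordinates $(y_1,\ldots,y_{\ell-1})$ is unimodular. Anchor at $z_0=y_0=0$. Then $y_{kt\bmod\ell}$ equals $e_0+\cdots+e_{k-1}$ minus an integer. So, up to a constant translation, the map is a coordinate permutation composed with a unitriangular integer matrix, and its determinant is $\pm1$. By the remark after \eqref{eq:difference-rows}, $\Gamma_\ell$ is also unimodular.

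One point needs care: the integer offsets must be constant on the whole region, so that the map is a single affine bijection and not a piecewise one. This follows because $\mathcal S_{\ell,t}(r)$ forces the successor shift to be exactly $i\mapsto i+t$ (Lemma~\ref{lem:shift}), which fixes the cyclic interleaving pattern.

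Combining the steps with Lemma~\ref{lem:gap-law}, whose density is $(\ell-1)!$ with respect to $\eta_\ell$, gives
\[
 \nu_\ell(\mathcal S_{\ell,t}(r))=(\ell-1)!\cdot\frac{(\ell r-t)^{\ell-1}}{(\ell-1)!}=(\ell r-t)^{\ell-1}.
\]

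As a sanity check, this could be verified against \eqref{eq:rational-coefficient} for $t=1$ by a small lattice count.
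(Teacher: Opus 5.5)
Your proof is correct, and it takes a different route from the paper's. The paper never computes a volume. At a rational scale $r=A/B$ it evaluates the arithmetic definition \eqref{eq:rational-coefficient} directly. It uses the integer bijection $u_i=kA-\sum_{v=0}^{t-1}d_{i+v}$ (with $k$ the dilation factor), whose inverse is $d_i=k(B-2A)+u_{i-t}+u_{i+1}$, to obtain $C_{\ell,t}(kB,kA)=\binom{kD+\ell-1}{\ell-1}$ with $D=\ell A-tB$. The $(\ell-1)$st forward difference then gives $(D/B)^{\ell-1}$. Irrational scales follow because the Lagrange formula \eqref{eq:gap-interpolation} reproduces the polynomial $(\ell r-t)^{\ell-1}$ from its values at the nodes $\rho_k$.

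Your substitution $f=r-e$, with $h_{kt-1}=1-2r+f_k+f_{k+1}$ in your orbit indexing, is the continuous counterpart of that bijection. After division by $kB$, the paper's $u_i$ is $r$ minus the $t$-block starting at $i$, which is your $f$-coordinate read in index order rather than orbit order. Your Step~1 is the paper's complementation step, so the key identity is shared.

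Your version buys several things. It is shorter, it handles every real $r$ at once without lattice counts or interpolation, and it directly yields $\nu_\ell(\mathcal S_{\ell,t}(c_t+\delta))=(\ell\delta)^{\ell-1}$, which is the quantity Section~\ref{sec:critical} actually uses. However, to say anything about the arithmetic coefficient $P_{\ell,t}$ named in the statement, you must invoke Proposition~\ref{prop:gap-evaluation}, and with it Ehrhart theory and Lemma~\ref{lem:polynomial}. The paper's proof establishes the identity for $P_{\ell,t}$ straight from its definition, without either of those, and also serves as a worked instance of the lattice-count formula.

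One minor remark: the caveat in your Step~3 about integer offsets is unnecessary. The identity $z_{kt}=y_{kt\bmod\ell}+\lfloor kt/\ell\rfloor$ holds identically by the periodic extension $z_{i+\ell}=z_i+1$. Lemma~\ref{lem:shift}, which concerns the full-sample map, is not the relevant justification there.
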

\begin{proof}
Here $I_{\ell,t}=(c_t,1/2)$, so $P_{\ell,t}(r)=0$ for $r\leq c_t$ by the definition following~\eqref{eq:support}. First take a rational scale $r=A/B\in(c_t,1/2)$ in lowest terms and put $D=\ell A-tB>0$.

Fix an integer $k\geq1$ and a vector $d\in\Z_{\geq0}^{\ell}$ counted by $C_{\ell,t}(kB,kA)$. All subscripts below are interpreted modulo $\ell$. Define
\begin{equation}\label{eq:critical-integer-map}
 u_i=kA-\sum_{v=0}^{t-1}d_{i+v},
 \qquad i=0,\ldots,\ell-1.
\end{equation}
Under~\eqref{eq:critical-integer-map}, the two inequalities defining $C_{\ell,t}(kB,kA)$ become
\[
 0\leq u_i\leq d_{i+t},\qquad i=0,\ldots,\ell-1.
\]
In particular, every $u_i$ is nonnegative. Each $d_i$ occurs in exactly $t$ of these block sums, so
\[
 \sum_{i=0}^{\ell-1}u_i
 =\ell kA-t\sum_{i=0}^{\ell-1}d_i=kD.
\]
The converse argument below shows that the upper bounds $u_i\leq d_{i+t}$ hold automatically for the reconstructed $d$, so they impose no additional restriction on nonnegative $u$ with total $kD$.

The two $t$-blocks defining $u_{i-t}$ and $u_{i+1}$ contain every $d$-coordinate except $d_i$. Hence $u_{i-t}+u_{i+1}=2kA-(kB-d_i)$, giving the inverse formula
\begin{equation}\label{eq:critical-integer-inverse}
 d_i=k(B-2A)+u_{i-t}+u_{i+1}.
\end{equation}
The complementary-block identity underlying this inverse also appears in~\cite[Lemma~6.3(b), proof]{AA}.

Conversely, take any $u\in\Z_{\geq0}^{\ell}$ with $\sum_i u_i=kD$ and define $d$ by~\eqref{eq:critical-integer-inverse}. Since $B>2A$, every $d_i$ is nonnegative, and
\[
 \begin{aligned}
 \sum_{i=0}^{\ell-1}d_i
 &=\ell k(B-2A)+2kD=kB,\\
 \sum_{v=0}^{t-1}d_{i+v}
 &=tk(B-2A)+kD-u_i=kA-u_i\leq kA.
 \end{aligned}
\]
In the second line, the two $t$-blocks of $u$ contain every coordinate except $u_i$. This equality also shows that~\eqref{eq:critical-integer-map} recovers the prescribed $u$. The remaining inequalities follow by complementation:
\[
 \sum_{v=0}^{t}d_{i+v}
 =kB-\sum_{v=0}^{t-1}d_{i+t+1+v}
 \geq k(B-A)>kA.
\]
Thus the two formulas are mutually inverse integer maps between the vectors counted by $C_{\ell,t}(kB,kA)$ and the nonnegative integer solutions of $\sum_i u_i=kD$. Counting the latter by placing $\ell-1$ separators among $kD+\ell-1$ positions gives
\begin{equation}\label{eq:critical-integer-count}
 C_{\ell,t}(kB,kA)
 =\#\left\{u\in\Z_{\geq0}^{\ell}:\sum_{i=0}^{\ell-1}u_i=kD\right\}
 =\binom{kD+\ell-1}{\ell-1}.
\end{equation}
This also holds for $k=0$, when both sides count only the zero vector.

As a polynomial in $k$,
\[
 \binom{kD+\ell-1}{\ell-1}
 =\frac{(kD+1)\cdots(kD+\ell-1)}{(\ell-1)!}
\]
has degree $\ell-1$ and leading coefficient $D^{\ell-1}/(\ell-1)!$. By the leading-coefficient calculation in the proof of Lemma~\ref{lem:finite-difference}, its $(\ell-1)$st forward difference is $D^{\ell-1}$. Substituting~\eqref{eq:critical-integer-count} into the definition~\eqref{eq:rational-coefficient} therefore gives
\[
 \begin{aligned}
 P_{\ell,t}(A/B)
 &=\frac{1}{B^{\ell-1}}\sum_{k=0}^{\ell-1}
    (-1)^{\ell-1-k}\binom{\ell-1}{k}
    \binom{kD+\ell-1}{\ell-1}\\
 &=\frac{D^{\ell-1}}{B^{\ell-1}}
  =(\ell A/B-t)^{\ell-1}.
 \end{aligned}
\]
Finally, $c_t=t/\ell$ and $1/2$ both belong to $\mathcal R_\ell$. Thus every interval between consecutive elements of $\mathcal R_\ell$ that meets $(c_t,1/2)$ lies entirely in this range, and its $\ell$ distinct rational interpolation nodes lie there as well. Since $r\mapsto(\ell r-t)^{\ell-1}$ is a polynomial of degree $\ell-1$, the Lagrange formula~\eqref{eq:gap-interpolation} reproduces it from the rational values just computed. The partition points in $(c_t,1/2)$ are themselves rational and were already covered above. This proves the equality for every real $r\in(c_t,1/2)$; together with the zero value for $r\leq c_t$, it gives~\eqref{eq:critical-probability}. By Proposition~\ref{prop:gap-evaluation}, this coefficient equals the gap probability $\nu_\ell(\mathcal S_{\ell,t}(r))$ throughout $0<r<1/2$.
\end{proof}

\subsection{Candidate orbits and simultaneous persistence}
We now estimate the probability that several candidate periodic orbits persist simultaneously when all sample points are included.

Fix $0<\delta<1/2-c_t$ and $1\leq j\leq\lfloor n/\ell\rfloor$. Let $\mathfrak G_{n,j}$ be the set of all unordered collections of $j$ pairwise disjoint $\ell$-element subsets of $[n]=\{1,\ldots,n\}$. Its cardinality is
\begin{equation}\label{eq:collections}
 D_{n,j}=\binom n{j\ell}\frac{(j\ell)!}{j!(\ell!)^j}
        =\frac{n!}{j!(\ell!)^j(n-j\ell)!}.
\end{equation}
For a fixed index set $G\subseteq[n]$ of size $\ell$, specialize the event $E_L$ of Section~\ref{subsec:preserving-orbits} to $L=G$, $M=\ell$, $s=t$ and $r=c_t+\delta$:
\[
 E_G=\{\omega\in\Omega_n:
       H_\ell(\pr_G(\omega))\in\mathcal S_{\ell,t}(c_t+\delta)\}.
\]
By the fixed-index marginal law following~\eqref{eq:projection}, $H_\ell\circ\pr_G$ has law $\nu_\ell$. Hence Lemma~\ref{lem:critical-probability} and $c_t=t/\ell$ yield
\[
 \begin{aligned}
 \Pp_n(E_G)
 &=\nu_\ell\bigl(\mathcal S_{\ell,t}(c_t+\delta)\bigr)
   =P_{\ell,t}(c_t+\delta)\\
 &=\bigl(\ell(c_t+\delta)-t\bigr)^{\ell-1}
   =(\ell\delta)^{\ell-1}.
 \end{aligned}
\]
For $\mathcal G\in\mathfrak G_{n,j}$, set
\[
 E_{\mathcal G}=\bigcap_{G\in\mathcal G}E_G.
\]
Since the sets $G$ in $\mathcal G$ are pairwise disjoint, these events depend on disjoint blocks of independent coordinates. Therefore
\begin{equation}\label{eq:collection-probability}
 \Pp_n(E_{\mathcal G})
 =\prod_{G\in\mathcal G}\Pp_n(E_G)
 =(\ell\delta)^{j(\ell-1)}.
\end{equation}
We next count the collections for which $E_{\mathcal G}$ occurs. By linearity of expectation and~\eqref{eq:collections}, their expected number is
\begin{equation}\label{eq:candidate-collection-mean}
 \begin{aligned}
 A_{n,j}(\delta)
 &:=\E_n\sum_{\mathcal G\in\mathfrak G_{n,j}}
       \ind_{E_{\mathcal G}}
   =\sum_{\mathcal G\in\mathfrak G_{n,j}}
       \Pp_n(E_{\mathcal G})\\
 &=D_{n,j}(\ell\delta)^{j(\ell-1)}.
 \end{aligned}
\end{equation}
Linearity of expectation applies even when different collections share indices. Thus $A_{n,j}(\delta)$ is the expected number of collections satisfying the successor condition within each group. Persistence of these groups in the full sample is addressed next.

\begin{lemma}\label{lem:persistence}
For the parameters above,
\begin{equation}\label{eq:persistence-bound}
 A_{n,j}(\delta)(1-\varepsilon_{n,j}(\delta))_+
 \leq B_{c_t,j}(n,c_t+\delta)\leq A_{n,j}(\delta),
\end{equation}
where
\begin{equation}\label{eq:error-bound}
 \varepsilon_{n,j}(\delta)
   =j(j-1)\ell^2\delta+(n-j\ell)j\ell\delta.
\end{equation}
\end{lemma}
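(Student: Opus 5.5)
The plan is to compare, collection by collection, the event that every group of $\mathcal G\in\mathfrak G_{n,j}$ is a genuine periodic orbit of the full-sample map with the event $E_{\mathcal G}$. Put $r=c_t+\delta$. By Lemma~\ref{lem:shift} with $j=1$ (so $M=\ell$, $s=t$), an $\ell$-set $G$ is a periodic orbit of fraction $c_t$ of the full map if and only if $F_r(z_i)=z_{i+t}$ for its lifts. Write $I_G$ for the indicator of this event.

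As in the proof of Proposition~\ref{prop:moments}, choosing $j$ distinct $c_t$-orbits is the same as choosing a collection $\mathcal G$ with $I_G=1$ for all $G\in\mathcal G$. Hence
\[
 \binom{N_{c_t}(n,r)}{j}=\sum_{\mathcal G\in\mathfrak G_{n,j}}\prod_{G\in\mathcal G}I_G
\]
almost surely, and $B_{c_t,j}(n,r)$ is the expectation of the right-hand side.

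\emph{Upper bound.} If $I_G=1$, then $z_{i+t}-z_i<r\leq z_{i+t+1}-z_i$. The first inequality holds because $z_{i+t}$ is the image, and the second because $z_{i+t+1}$ is a sample point beyond the farthest eligible one. So the successor inequalities of $\mathcal S_{\ell,t}(r)$ hold for the gaps of $G$ alone, giving $I_G\leq\ind_{E_G}$. Taking expectations and using~\eqref{eq:candidate-collection-mean} yields $B_{c_t,j}\leq A_{n,j}(\delta)$.

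\emph{Lower bound.} By the avoidance analysis of Lemma~\ref{lem:avoidance} with $L=G$, we have $I_G=\ind_{E_G}\ind\{X_b\notin V_G\ \forall b\notin G\}$, and $\mu(V_G)=\ell r-t=\ell\delta$. Therefore
\[
 \ind_{E_{\mathcal G}}-\prod_{G\in\mathcal G}I_G\leq\sum_{G\in\mathcal G}\ \sum_{b\notin G}\ind_{E_{\mathcal G}}\ind\{X_b\in V_G\}.
\]
I would then bound each term by $\ell\delta\,\Pp_n(E_{\mathcal G})$, splitting into two cases.
\begin{itemize}
\item[(i)] $b$ lies outside the union of the groups. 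There are $(n-j\ell)j$ such pairs $(G,b)$. Conditionally on the coordinates in the union, $X_b$ is uniform and independent, and $V_G$ is determined by those coordinates. So the conditional probability is $\mu(V_G)=\ell\delta$ on $E_{\mathcal G}$.
\item[(ii)] $b$ lies in another group $G'\in\mathcal G$. There are $j(j-1)\ell$ such pairs.
\end{itemize}

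Case (ii) is the main obstacle, since $X_b$ is tied to the event $E_{G'}$. I would handle it by rotation invariance. Decompose the coordinates of $G'$ as a common rotation (say the anchor $X_{b'}$ for its smallest index) together with relative positions, as in~\eqref{eq:anchor-map}. The event $E_{G'}$ depends only on the relative positions, which are independent of the anchor. Hence, conditionally on $E_{G'}$, on the relative positions, and on all coordinates of the other groups, $X_b$ is a fixed offset of a uniform anchor, and so is uniform on $S^1$. Since $V_G$ is measurable with respect to the coordinates of $G$, we get $\Pp_n(E_{\mathcal G}\cap\{X_b\in V_G\})=\ell\delta\,\Pp_n(E_{\mathcal G})$.

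Summing both cases gives
\[
 \E_n\prod_{G\in\mathcal G}I_G\geq\Pp_n(E_{\mathcal G})\bigl(1-\varepsilon_{n,j}(\delta)\bigr),
\]
with $\varepsilon_{n,j}$ as in~\eqref{eq:error-bound}. Summing over $\mathfrak G_{n,j}$ using~\eqref{eq:collection-probability} gives $A_{n,j}(\delta)(1-\varepsilon_{n,j})$. Since $B_{c_t,j}\geq0$, the positive part may be taken, which completes~\eqref{eq:persistence-bound}.
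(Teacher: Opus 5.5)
Your proposal is correct and follows essentially the paper's argument. The upper bound uses the fact that a full-sample $c_t$-orbit already satisfies its own successor inequalities, so $I_G\le\ind_{E_G}$. The lower bound is a union bound in which points of other selected groups are handled through the anchor decomposition: a uniform anchor independent of the relative configuration that determines $E_{G'}$. Unselected points are handled by conditional uniformity, and the two contributions give exactly $\varepsilon_{n,j}(\delta)$.

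The differences are only cosmetic. You work with indicator inequalities under $\Pp_n$ rather than the conditional measure $\Pp_n^{\mathcal G}$. You also bound the unselected-point terms group by group, instead of using $\mu(\bigcup_b V_b)\le j\ell\delta$.
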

\begin{proof}
Fix $\mathcal G=\{G_1,\ldots,G_j\}$, using an arbitrary enumeration solely for the proof. For each $b$, let $V_b$ be the forbidden union~\eqref{eq:avoidance-arcs} associated with $G_b$. We first bound, conditional on $E_{\mathcal G}$, the probability that a point indexed by another selected group $G_{b'}$, $b'\ne b$, enters $V_b$ and changes a prescribed successor of $G_b$. We then treat the unselected points.

Since~\eqref{eq:collection-probability} is positive, the conditional probability measure
\[
 \Pp_n^{\mathcal G}(E)
 =\frac{\Pp_n(E\cap E_{\mathcal G})}
        {\Pp_n(E_{\mathcal G})},\qquad E\in\mathcal F_n,
\]
is well defined. In each $G_b$, anchor at its smallest sample index and write $R_b$ for the anchor position and $\mathbf Y_b$ for the other relative positions, retained in sample-index order. This is the map~\eqref{eq:anchor-map} applied to the coordinates indexed by $G_b$. The vector $\mathbf Y_b$ records the relative configuration of the group, while $R_b$ specifies its rotation around the circle.

Under $\Pp_n$, the anchor map gives a uniform $R_b$ independent of $\mathbf Y_b$. Since different groups use disjoint sample indices, the pairs $(R_b,\mathbf Y_b)$ are independent. Hence the anchor vector $(R_1,\ldots,R_j)$ has law $\mu^{\otimes j}$ and is independent of the $\sigma$-algebra
\[
 \mathcal Y_{\mathcal G}=\sigma(\mathbf Y_1,\ldots,\mathbf Y_j).
\]
Each event $E_{G_b}$ is $\sigma(\mathbf Y_b)$-measurable, so $E_{\mathcal G}\in\mathcal Y_{\mathcal G}$. Under $\Pp_n^{\mathcal G}$, the anchor vector therefore still has law $\mu^{\otimes j}$ and is independent of $\mathcal Y_{\mathcal G}$. In particular, for any Borel sets $U_1,\ldots,U_j\subseteq S^1$,
\[
 \begin{aligned}
 &\Pp_n^{\mathcal G}\bigl(R_1\in U_1,\ldots,R_j\in U_j
                      \mid\mathcal Y_{\mathcal G}\bigr)\\
 &\qquad=\Pp_n\bigl(R_1\in U_1,\ldots,R_j\in U_j
                      \mid\mathcal Y_{\mathcal G}\bigr)
       =\prod_{b=1}^j\mu(U_b)
 \end{aligned}
\]
$\Pp_n^{\mathcal G}$-almost surely.

For $b\ne b'$, put $\mathcal K_b=\mathcal Y_{\mathcal G}\vee\sigma(R_b)$. The preceding independence implies that $R_{b'}$ is independent of $\mathcal K_b$ under $\Pp_n^{\mathcal G}$ and has law $\mu$. The endpoints of the arcs defining $V_b$ are $\mathcal K_b$-measurable, and $\mu(V_b)=\ell\delta$ by Lemma~\ref{lem:avoidance}. For $i\in G_{b'}$, let $\tau_i(\mathbf Y_{b'})$ denote its position relative to the anchor: it is zero for the anchor and the corresponding component of $\mathbf Y_{b'}$ otherwise. Then $X_i=R_{b'}+\tau_i(\mathbf Y_{b'})$ modulo one, and $\tau_i(\mathbf Y_{b'})$ is $\mathcal K_b$-measurable. Writing $\E_n^{\mathcal G}$ for expectation under $\Pp_n^{\mathcal G}$, we obtain
\[
 \begin{aligned}
 \Pp_n^{\mathcal G}(X_i\in V_b\mid\mathcal K_b)
 &=\E_n^{\mathcal G}\bigl(
     \ind_{V_b}(R_{b'}+\tau_i(\mathbf Y_{b'}))
       \mid\mathcal K_b\bigr)\\
 &=\int_{S^1}\ind_{V_b}\bigl(u+\tau_i(\mathbf Y_{b'})\bigr)\,d\mu(u)\\
 &=\int_{S^1}\ind_{V_b-\tau_i(\mathbf Y_{b'})}(u)\,d\mu(u)\\
 &=\mu\bigl(V_b-\tau_i(\mathbf Y_{b'})\bigr)
   =\mu(V_b)=\ell\delta.
 \end{aligned}
\]
Here the integral of a set indicator is its measure, and translation invariance of $\mu$ gives the equality of the two set measures. All these identities hold $\Pp_n^{\mathcal G}$-almost surely. Taking expectations yields $\Pp_n^{\mathcal G}(X_i\in V_b)=\ell\delta$.

Almost surely, a point of $G_{b'}$ can change a prescribed successor of $G_b$ only if it lies in $V_b$. A union bound over the $\ell$ points of $G_{b'}$ and the $j(j-1)$ ordered pairs $(b,b')$ therefore gives
\[
 \begin{aligned}
 &\Pp_n^{\mathcal G}\bigl(
    \exists\,b,b'\in\{1,\ldots,j\},\ b\ne b',\
    \exists\,i\in G_{b'}:\ X_i\in V_b\bigr)\\
 &\qquad\leq
 \sum_{\substack{1\leq b,b'\leq j\\b\ne b'}}
 \sum_{i\in G_{b'}}\Pp_n^{\mathcal G}(X_i\in V_b)
 =j(j-1)\ell^2\delta.
 \end{aligned}
\]

Put $L_{\mathcal G}=\bigcup_{b=1}^jG_b$. Under $\Pp_n^{\mathcal G}$, conditional on $\mathcal A_{L_{\mathcal G}}$, the $n-j\ell$ unselected points are independent and uniform. Thus, for every $i\in[n]\setminus L_{\mathcal G}$,
\[
 \Pp_n^{\mathcal G}\left(
 X_i\in\bigcup_{b=1}^jV_b\,\middle|\,
 \mathcal A_{L_{\mathcal G}}\right)
 =\mu\left(\bigcup_{b=1}^jV_b\right)
 \leq j\ell\delta
\]
$\Pp_n^{\mathcal G}$-almost surely. Taking expectations and applying a union bound gives
\[
 \begin{aligned}
 &\Pp_n^{\mathcal G}\bigl(
   \exists\,i\in[n]\setminus L_{\mathcal G}:\
       X_i\in\textstyle\bigcup_{b=1}^jV_b\bigr)\\
 &\qquad\leq
 \sum_{i\in[n]\setminus L_{\mathcal G}}
 \Pp_n^{\mathcal G}\bigl(
       X_i\in\textstyle\bigcup_{b=1}^jV_b\bigr)
 \leq(n-j\ell)j\ell\delta.
 \end{aligned}
\]
Together with the bound for the other selected groups, this bounds the combined failure probability by~\eqref{eq:error-bound}.

Let $F_{\mathcal G}$ be the event that the prescribed groups are $j$ distinct full-sample $c_t$-orbits. On $E_{\mathcal G}$, absence of the failures just counted implies $F_{\mathcal G}$. Conversely, $F_{\mathcal G}$ implies $E_{\mathcal G}$. Indeed, after other points are removed, each retained full-sample successor is still eligible, and no farther clockwise point was eligible even in the full sample. Each prescribed orbit therefore retains its successors. Thus
\[
 \Pp_n(E_{\mathcal G})(1-\varepsilon_{n,j}(\delta))_+
 \leq\Pp_n(F_{\mathcal G})\leq\Pp_n(E_{\mathcal G}).
\]
For almost every sample,
\[
 \sum_{\mathcal G\in\mathfrak G_{n,j}}\ind_{F_{\mathcal G}}
           =\binom{N_{c_t}(n,c_t+\delta)}j.
\]
Taking expectations proves~\eqref{eq:persistence-bound}. The positive part makes the lower bound valid even when the union bound exceeds one.
\end{proof}

\subsection{Poisson convergence}
Recall from~\eqref{eq:critical-parameters} that $\lambda=\ell^{\ell-1}a^{\ell-1}/\ell!$, where $a>0$ is fixed. For the sequence in Theorem~\ref{thm:critical}, put $\delta_n=a n^{-\ell/(\ell-1)}$ and $r_n=c_t+\delta_n$. Then
\begin{equation}\label{eq:scale-limits}
 \lim_{n\to\infty}n\delta_n=0,
 \qquad n^\ell\delta_n^{\ell-1}=a^{\ell-1}.
\end{equation}

\begin{proposition}\label{prop:poisson}
For each integer $k\geq0$,
\begin{equation}\label{eq:poisson-masses}
 \lim_{n\to\infty}\Pp_n(N_{c_t}(n,r_n)=k)
       =e^{-\lambda}\frac{\lambda^k}{k!}.
\end{equation}
In particular, the laws of $N_{c_t}(n,r_n)$ converge weakly to $\Pois(\lambda)$ as $n\to\infty$.
\end{proposition}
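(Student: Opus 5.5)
Our approach is the method of moments for binomial moments. Lemma~\ref{lem:persistence} with $\delta=\delta_n$ controls each $B_{c_t,j}(n,r_n)$, and Lemma~\ref{lem:inversion} converts these moments into point masses.

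First, fix $j\geq1$ and compute the limit of $A_{n,j}(\delta_n)$. By~\eqref{eq:collections}, $D_{n,j}=n!/(j!(\ell!)^j(n-j\ell)!)\sim n^{j\ell}/(j!(\ell!)^j)$ as $n\to\infty$ with $j$ fixed. Then~\eqref{eq:candidate-collection-mean} and~\eqref{eq:scale-limits} give
\[
 A_{n,j}(\delta_n)\sim\frac{(n^\ell\delta_n^{\ell-1})^j\ell^{j(\ell-1)}}{j!(\ell!)^j}
 =\frac{1}{j!}\left(\frac{\ell^{\ell-1}a^{\ell-1}}{\ell!}\right)^j=\frac{\lambda^j}{j!}.
\]
The error term~\eqref{eq:error-bound} satisfies $\varepsilon_{n,j}(\delta_n)\leq j(j-1)\ell^2\delta_n+j\ell\,n\delta_n\to0$, again by~\eqref{eq:scale-limits}. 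Hence Lemma~\ref{lem:persistence} yields $B_{c_t,j}(n,r_n)\to\lambda^j/j!$ for every fixed $j$. The requirement $\delta_n<1/2-c_t$ holds for large $n$.

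Second, we pass from moments to masses. The inversion formula~\eqref{eq:inversion} has $\lfloor n/\ell\rfloor$ terms, which grow with $n$, so term-by-term limits are not immediately justified. We would instead use the Bonferroni-type truncation: for any $K\geq k$, the partial sums $\sum_{j=k}^{K}(-1)^{j-k}\binom jk B_{q,j}$ alternately bound $\Pp_n(N=k)$ from above and below, depending on the parity of $K-k$. This follows from the standard inequality $\sum_{j=k}^{K}(-1)^{j-k}\binom jk\binom Nj\gtrless\ind_{\{N=k\}}$ applied pointwise before taking expectations. Taking $n\to\infty$ with $K$ fixed, then $K\to\infty$, squeezes $\Pp_n(N=k)$ between partial sums of $\sum_{j\geq k}(-1)^{j-k}\binom jk\lambda^j/j!=e^{-\lambda}\lambda^k/k!$. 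The convergence follows because the Poisson series converges absolutely.

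The main obstacle is this justification of the interchange of limits, namely confirming the pointwise alternating inequality. We would state it via the identity $\sum_{j=k}^{K}(-1)^{j-k}\binom jk\binom Nj=\binom Nk\sum_{i=0}^{K-k}(-1)^i\binom{N-k}{i}=\binom Nk(-1)^{K-k}\binom{N-k-1}{K-k}$, whose sign pattern is immediate. Alternatively, a factorial-moment convergence theorem (for example, that convergence of all factorial moments to those of a Poisson law, which is determined by its moments, implies convergence in law) can be cited. Weak convergence follows from the convergence of all point masses on $\Z_{\geq0}$, since the limit masses sum to one.
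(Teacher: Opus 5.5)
Your proof is correct. Its first step is the same as the paper's: $A_{n,j}(\delta_n)\to\lambda^j/j!$ and $\varepsilon_{n,j}(\delta_n)\to0$, so Lemma~\ref{lem:persistence} gives $B_{c_t,j}(n,r_n)\to\lambda^j/j!$ for each fixed $j$.

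The routes differ in the passage from moments to point masses. The paper does not truncate the inversion formula~\eqref{eq:inversion}. It extends the sum to all $j\geq k$, which is harmless because $B_{c_t,j}=0$ for $j>\lfloor n/\ell\rfloor$. It then derives a uniform bound from the upper half of Lemma~\ref{lem:persistence} and $n!/(n-j\ell)!\leq n^{j\ell}$, namely $0\leq B_{c_t,j}(n,r_n)\leq\lambda^j/j!$ for every $j$ and all large $n$. Dominated convergence with the summable majorant $\binom jk\lambda^j/j!$ then justifies the termwise limit.

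Your Bonferroni sandwich rests on the pointwise identity $\sum_{j=k}^{K}(-1)^{j-k}\binom jk\binom Nj=\binom Nk(-1)^{K-k}\binom{N-k-1}{K-k}$. In the case $N=k$ you should read $\binom{-1}{K-k}=(-1)^{K-k}$, or simply check directly that the sum equals $1$. This argument needs only convergence of each fixed binomial moment and no control uniform in $j$. It is therefore the more general method-of-moments argument and would survive if only the lower-order moments were controlled. The paper's route is slightly shorter here because the domination comes for free from the explicit product form of $A_{n,j}(\delta_n)$.

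Both routes conclude weak convergence in the same way: the point masses converge to a probability vector on $\Z_{\geq0}$.
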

\begin{proof}
Fix $j\geq1$. For sufficiently large $n$, $j\ell\leq n$ and
\[
 A_{n,j}(\delta_n)
 =\frac{\lambda^j}{j!}
    \frac{n!}{(n-j\ell)!\,n^{j\ell}},\qquad
 \lim_{n\to\infty}\varepsilon_{n,j}(\delta_n)=0.
\]
Lemma~\ref{lem:persistence} therefore yields
\begin{equation}\label{eq:critical-moment-limit}
 \lim_{n\to\infty}B_{c_t,j}(n,r_n)=\frac{\lambda^j}{j!}.
\end{equation}
We now use the binomial-moment formula~\eqref{eq:binomial-moments} for every integer $j\geq0$. Each $c_t$-orbit contains $\ell$ points, and distinct periodic orbits are disjoint. Thus $N_{c_t}(n,r_n)\leq\lfloor n/\ell\rfloor$ almost surely, and consequently
\[
 B_{c_t,j}(n,r_n)
 =\E_n\binom{N_{c_t}(n,r_n)}j=0
 \qquad\bigl(j>\lfloor n/\ell\rfloor\bigr).
\]
Also, $B_{c_t,0}(n,r_n)=1$. The upper bound in Lemma~\ref{lem:persistence}, together with these values, gives, for all sufficiently large $n$ and every integer $j\geq0$,
\begin{equation}\label{eq:domination}
 0\leq B_{c_t,j}(n,r_n)\leq\frac{\lambda^j}{j!},
\end{equation}
since $n!/(n-j\ell)!\leq n^{j\ell}$ whenever $j\ell\leq n$.

For fixed $k\geq0$ and all sufficiently large $n$, Lemma~\ref{lem:inversion} gives
\[
 \begin{aligned}
 \Pp_n(N_{c_t}(n,r_n)=k)
 &=\sum_{j=k}^{\lfloor n/\ell\rfloor}
       (-1)^{j-k}\binom jk B_{c_t,j}(n,r_n)\\
 &=\sum_{j=k}^{\infty}(-1)^{j-k}\binom jk B_{c_t,j}(n,r_n).
 \end{aligned}
\]
The added terms vanish, so the second sum has the same value as the first. Its index set $\{k,k+1,\ldots\}$ is independent of $n$. To apply dominated convergence on this fixed index set, we use~\eqref{eq:domination} and the summable dominating sequence $\binom jk\lambda^j/j!$, for which
\[
 \sum_{j=k}^{\infty}\binom jk\frac{\lambda^j}{j!}
       =\frac{\lambda^k}{k!}e^\lambda<\infty.
\]
Dominated convergence for the counting measure therefore permits the interchange of the limit and the sum. Using~\eqref{eq:critical-moment-limit}, we obtain
\[
 \begin{aligned}
 \lim_{n\to\infty}\Pp_n(N_{c_t}(n,r_n)=k)
 &=\sum_{j=k}^{\infty}(-1)^{j-k}\binom jk
         \lim_{n\to\infty}B_{c_t,j}(n,r_n)\\
 &=\sum_{j=k}^{\infty}(-1)^{j-k}\binom jk\frac{\lambda^j}{j!}
 =e^{-\lambda}\frac{\lambda^k}{k!}.
 \end{aligned}
\]

These limiting masses sum to one. Their convergence therefore gives tightness and weak convergence on $\Z_{\geq0}$, even though the sampling spaces vary with $n$.
\end{proof}
This is a direct factorial-moment argument for the present full-sample orbit events. General rare-tuple Poisson results, including the binomial setting, are given in~\cite[Theorem~2 and Corollary~2]{ST}; see also~\cite{SB}. The calculation above supplies the specific intensity and persistence estimate needed here.

\subsection{Winding localization and the limiting homotopy law}
Let $q_n(\omega)$ be the common winding fraction of the map $f_{r_n}$ associated with $\Xn(\omega)$, with arbitrary values on the collision null set. We distinguish this random variable from the fixed target $c_t$.

\begin{lemma}\label{lem:localization}
Fix $t\in\Z_{\geq1}$ and $a>0$. For all sufficiently large $n$,
\begin{equation}\label{eq:localization}
 \Pp_n(c_{t-1}<q_n\leq c_t)\geq1-\frac1n.
\end{equation}
\end{lemma}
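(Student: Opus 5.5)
The plan is to prove the two inequalities $q_n\leq c_t$ and $q_n>c_{t-1}$ separately. The upper bound will hold deterministically for large $n$, by an arithmetic argument on $\Qn$. The lower bound will hold off a small event on which the sample has a large circular gap.

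\emph{Upper bound.} On the distinct-point locus, $q_n\in\Qn$ and $q_n<r_n=c_t+\delta_n$, as noted after~\eqref{eq:period-winding}. Suppose $q_n=w'/\ell'$ with $\ell'\leq n$ and $q_n>c_t=t/\ell$. Then
\[
 q_n-c_t=\frac{w'\ell-t\ell'}{\ell\ell'}\geq\frac1{\ell n}.
\]
By~\eqref{eq:scale-limits}, $n\delta_n=a n^{-1/(\ell-1)}\to0$, so $\delta_n<1/(\ell n)$ for all large $n$. Hence no element of $\Qn$ lies in $(c_t,r_n)$, and $q_n\leq c_t$ almost surely for all large $n$.

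\emph{Lower bound via the maximal gap.} Let $g(\omega)$ be the largest circular gap between clockwise-consecutive sample points, and put
\[
 \gamma=c_t-c_{t-1}=\frac{1}{(2t-1)(2t+1)}>0.
\]
Take any lift $x_i$. The point $x_b'$ following $F_{r_n}(x_i)$ satisfies $x_{b'}-x_i\geq r_n$ by maximality in~\eqref{eq:farthest}. Its distance from $F_{r_n}(x_i)$ is at most $g$. Therefore
\[
 F_{r_n}(x_i)-x_i\geq r_n-g .
\]
Iterating along a periodic orbit of period $\ell'$ and winding $w'$ gives $w'\geq\ell'(r_n-g)$. Hence $q_n\geq r_n-g>c_t-g$. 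On the event $\{g<\gamma\}$ this yields $q_n>c_{t-1}$.

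\emph{Bounding the bad event.} It remains to show $\Pp_n(g\geq\gamma)\leq 1/n$ for large $n$. The event $\{g\geq\gamma\}$ is contained in the union over $b$ of the events that the clockwise arc of length $\gamma$ starting at $X_b$ contains no other sample point. As in the $M=1$ case of Proposition~\ref{prop:moments}, each such event has probability $(1-\gamma)^{n-1}$. The union bound then gives
\[
 \Pp_n(g\geq\gamma)\leq n(1-\gamma)^{n-1},
\]
which is at most $1/n$ for all sufficiently large $n$, since $\gamma$ is fixed.

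Combining the two bounds proves~\eqref{eq:localization}. I expect the main point of care to be the deterministic step-length bound $F_{r_n}(x)-x\geq r_n-g$, specifically the maximality argument in~\eqref{eq:farthest} under the strict convention. The other steps are routine.
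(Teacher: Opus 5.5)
Your proof is correct. The upper bound $q_n\leq c_t$ is the same arithmetic argument the paper gives: the realized fraction lies in $\Qn$, it is less than $r_n$, and $n\delta_n\to0$. The lower bound takes a genuinely different route.

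\textbf{The paper's lower bound.} The paper does not argue this step directly. It invokes Lemma~5.2 of Adamaszek--Adams--Motta at the deterministic scale $r_n$, which gives $\Pp_n\bigl(q_n\geq r_n-2\log n/n\bigr)\geq1-1/n$. It then uses $r_n-2\log n/n\to c_t>c_{t-1}$.

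\textbf{Your lower bound.} You prove a deterministic inequality yourself. From maximality in~\eqref{eq:farthest} you get $x_{b+1}-x_i\geq r_n$, and $x_{b+1}-x_b\leq g$, so each step satisfies $F_{r_n}(x_i)-x_i\geq r_n-g$. Summing over a period gives $q_n\geq r_n-g$. You then only need the fixed margin $\gamma=c_t-c_{t-1}$, and you control $\{g\geq\gamma\}$ by the empty-arc union bound $n(1-\gamma)^{n-1}$.

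\textbf{Comparison.} The cited lemma is a sharper, quantitative statement: with high probability $q_n$ lies within $2\log n/n$ of $r_n$. That is more than this lemma needs. Your version uses only a fixed margin, which has three advantages:
\begin{itemize}
\item it is self-contained and uses only the definition of $F_r$ and the fixed-point arc calculation already in Proposition~\ref{prop:moments};
\item it avoids checking that the external lemma's hypotheses and distance conventions match the strict dynamics used here;
\item it gives an exponentially small failure probability, well below the required $1/n$.
\end{itemize}
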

\begin{proof}
Write the realized fraction as $q_n=w'/\ell'$ in lowest terms, with $\ell'\leq n$. If $q_n>c_t=t/\ell$, then $\ell w'-t\ell'$ is a positive integer and is therefore at least one. Hence
\[
 q_n-c_t=\frac{\ell w'-t\ell'}{\ell\ell'}
                   \geq\frac1{\ell\ell'}
                   \geq\frac1{\ell n}.
\]
Every orbit step is shorter than $r_n$, so~\eqref{eq:period-winding} gives $q_n<r_n$. Since $n\delta_n\to0$ as $n\to\infty$, for large $n$ the inequalities $q_n>c_t$ would force
\[
 c_t+\frac1{\ell n}\leq q_n<r_n=c_t+\delta_n
                           <c_t+\frac1{\ell n},
\]
a contradiction. Thus $q_n\leq c_t$ almost surely for these $n$.

For the lower bound, apply~\cite[Lemma~5.2]{AAM} at the deterministic scale $r_n$. For all sufficiently large $n$, it gives
\begin{equation}\label{eq:aam-bound}
 \Pp_n\left(q_n\geq r_n-\frac{2\log n}{n}\right)\geq1-\frac1n.
\end{equation}
Moreover,
\[
 \lim_{n\to\infty}\left(r_n-\frac{2\log n}{n}\right)=c_t,
 \qquad c_t-c_{t-1}=\frac1{(2t+1)(2t-1)}>0.
\]
For large $n$, the convergence above gives $r_n-2\log n/n>(c_t+c_{t-1})/2>c_{t-1}$. Thus the event in~\eqref{eq:aam-bound} implies $q_n>c_{t-1}$. Since $\Pp_n(q_n\leq c_t)=1$, we obtain
\[
 \Pp_n(c_{t-1}<q_n\leq c_t)
 =\Pp_n(q_n>c_{t-1})
 \geq\Pp_n\left(q_n\geq r_n-\frac{2\log n}{n}\right)
 \geq1-\frac1n.\qedhere
\]
\end{proof}

\begin{proof}[Proof of Theorem~\ref{thm:critical}]
Let $\mathcal E_n=\{\omega\in\Omega_n:c_{t-1}<q_n(\omega)\leq c_t\}$. Lemma~\ref{lem:localization} gives $\Pp_n(\mathcal E_n^c)\leq1/n$ for all sufficiently large $n$. By the definition of $N_{c_t}$, we have $N_{c_t}(n,r_n)\geq1$ if and only if $q_n=c_t$. Combining this observation with Proposition~\ref{prop:classifier} gives the following dictionary on $\mathcal E_n$:
\begin{equation}\label{eq:critical-dictionary}
 \renewcommand{\arraystretch}{1.2}
 \begin{array}{|c|c|c|}\hline
 N_{c_t}(n,r_n)&\text{winding fraction }q_n&\text{homotopy type of }\VR(\Xn;r_n)\\ \hline\hline
 0&c_{t-1}<q_n<c_t&S^{2t-1}\\ \hline
 1&q_n=c_t&\pt\\ \hline
 m+1\ (m\geq1)&q_n=c_t&\displaystyle\bigvee^m S^{2t}\\ \hline
 \end{array}
\end{equation}
All event identifications below are understood up to the fixed-scale strict/closed null event.

For the odd sphere, Proposition~\ref{prop:classifier} identifies its event with $\{c_{t-1}<q_n<c_t\}=\{N_{c_t}(n,r_n)=0\}\cap\mathcal E_n$. Consequently,
\[
 \begin{aligned}
 0&\leq\Pp_n(N_{c_t}(n,r_n)=0)-p_{S^{2t-1}}(n,r_n)\\
  &=\Pp_n\bigl(\{N_{c_t}(n,r_n)=0\}\cap\mathcal E_n^c\bigr)
    \leq\Pp_n(\mathcal E_n^c)\leq\frac1n.
 \end{aligned}
\]
Applying Proposition~\ref{prop:poisson} with $k=0$ therefore gives
\[
 \lim_{n\to\infty}p_{S^{2t-1}}(n,r_n)
 =\lim_{n\to\infty}\Pp_n(N_{c_t}(n,r_n)=0)
 =e^{-\lambda}\frac{\lambda^0}{0!}=e^{-\lambda}.
\]

We treat the last two rows of~\eqref{eq:critical-dictionary} together. Fix $k\geq1$ and interpret $\bigvee^0 S^{2t}$ as $\pt$. On $\mathcal E_n$, the type $\bigvee^{k-1}S^{2t}$ occurs if and only if $N_{c_t}(n,r_n)=k$. The latter event is contained in $\mathcal E_n$, since a positive count forces $q_n=c_t$. Therefore
\[
 \begin{aligned}
 0&\leq p_{\bigvee^{k-1}S^{2t}}(n,r_n)
          -\Pp_n(N_{c_t}(n,r_n)=k)\\
  &=\Pp_n\bigl(\{\VR(\Xn;r_n)\simeq\bigvee^{k-1}S^{2t}\}
                 \cap\mathcal E_n^c\bigr)
    \leq\Pp_n(\mathcal E_n^c)\leq\frac1n.
 \end{aligned}
\]
Proposition~\ref{prop:poisson} now gives the single calculation
\[
 \lim_{n\to\infty}p_{\bigvee^{k-1}S^{2t}}(n,r_n)
 =\lim_{n\to\infty}\Pp_n(N_{c_t}(n,r_n)=k)
 =e^{-\lambda}\frac{\lambda^k}{k!}.
\]
Taking $k=1$ and $k=m+1$ gives~\eqref{eq:critical-point} and~\eqref{eq:critical-even}, respectively. Finally, the table exhausts the possibilities on $\mathcal E_n$, so the total probability of all remaining homotopy types satisfies
\[
 \begin{aligned}
 0&\leq1-p_{S^{2t-1}}(n,r_n)-p_{\pt}(n,r_n)
          -\sum_{m=1}^{\infty}p_{\bigvee^m S^{2t}}(n,r_n)\\
  &\leq\Pp_n(\mathcal E_n^c)\leq\frac1n
          \xrightarrow[n\to\infty]{}0.
 \end{aligned}
\]
This proves the last assertion.
\end{proof}

\end{document}